\documentclass[preprint,12pt]{elsarticle}

\usepackage[utf8]{inputenc}
\usepackage[T1]{fontenc}
\usepackage{amsmath,amssymb,amsthm}
\usepackage{booktabs}
\usepackage{enumitem}
\usepackage[bookmarksopen=true,colorlinks=true,linkcolor=red,citecolor=blue,backref=page]{hyperref}
\usepackage[capitalise,noabbrev]{cleveref}
\usepackage{float}
\usepackage{tikz}
\usetikzlibrary{decorations.pathmorphing,calc}

\usepackage{caption}
\usepackage{amsthm}
\usepackage{tikz-cd}

\usepackage{tkz-graph}
\usetikzlibrary{arrows}

\usepackage{subcaption}

\theoremstyle{plain}
\newtheorem{theorem}{Theorem}[section]
\newtheorem{lemma}[theorem]{Lemma}
\newtheorem{corollary}[theorem]{Corollary}
\newtheorem{proposition}[theorem]{Proposition}

\newcommand{\SD}{\operatorname{SD}}

\begin{document}
\begin{frontmatter}

\title{Generalized Sterboul--Deming Configurations}

\author[addr1,addr2]{Daniel A. Jaume}
\ead{djaume@unsl.edu.ar}
\author[addr1]{Cristian Panelo}
\ead{crpanelo@unsl.edu.ar}
\author[addr1]{Kevin Pereyra}
\ead{kdpereyra@unsl.edu.ar}
\address[addr1]{Departamento de Matem\'{a}tica, Universidad Nacional de San Luis, San Luis, Argentina}
\address[addr2]{Instituto de Matem\'{a}tica Aplicada San Luis, Universidad Nacional de San Luis and CONICET, San Luis, Argentina}

\begin{abstract}
Sterboul and Deming gave classical matching-based characterizations of
non-K\H{o}nig--Egerv\'{a}ry graphs through flower--posy and blossom-pair
configurations. We consider two classical configuration families, denoted
\(T\) and \(S\), and introduce a new walk-based family \(J\), based on
\(J\)-flowers and \(J\)-posies.

Our main result proves that, for every graph \(G\),
\[
\SD_T(G)=\SD_S(G)=\SD_J(G).
\]
Thus the additional flexibility of the \(J\)-framework preserves the set of
vertices detected by the classical configurations. The proof is
vertex-preserving and passes through strict-Hall structure in traces of
\(J\)-posies. As a consequence, every prescribed vertex of a connected
matchable strict-Hall graph lies in a rigid \(T\)-posy for a suitable perfect
matching, linking the theory naturally with matching-covered graphs.
\end{abstract}

\begin{keyword}
K\H{o}nig--Egerv\'{a}ry graph \sep Sterboul--Deming configuration \sep blossom
\sep flower \sep posy \sep matching \sep strict Hall condition
\MSC[2020] 05C70 \sep 05C75
\end{keyword}

\end{frontmatter}

\section{Introduction}\label{sec_introduction}

Let \(\alpha(G)\) be the independence number and \(\mu(G)\) the matching
number of a finite simple graph \(G\). The graph is K\H{o}nig--Egerv\'{a}ry
(KE) if
\[
\alpha(G)+\mu(G)=|V(G)|.
\]
Every bipartite graph is KE by the classical theorem of
Egerv\'{a}ry~\cite{egervary1931combinatorial}. For general graphs, Sterboul
characterized the obstruction to the KE property by excluded configurations
relative to a maximum matching, using Edmonds' flowers and introducing the
posy~\cite{sterboul1979characterization}. Independently, Deming obtained a
matching-based characterization in terms of blossom-pairs generated by
maximum matchings~\cite{deming1979independence}.

We organize the classical path-based constructions into two precise families,
denoted \(T\) and \(S\). We then introduce a new walk-based family, denoted
\(J\). Its basic objects, the \(J\)-flower and the \(J\)-posy, replace selected
alternating paths by alternating walks and allow repeated vertices, repeated
edges, and more flexible intersections between blossoms and joining paths. This
flexibility provides a useful tool for handling alternating structures.

For each of the three configuration families, let \(\SD_T(G)\),
\(\SD_S(G)\), and \(\SD_J(G)\) denote the sets of vertices contained in a
corresponding configuration for some maximum matching of \(G\). Our main
result proves that these three sets coincide. Thus the set of vertices
detected by generalized Sterboul--Deming configurations is independent of the
chosen configuration family.

An important advantage of the configuration-invariance theorem is
methodological. The rigid \(T\)-configurations are particularly convenient
for understanding the underlying obstruction: their geometry is simple,
explicit, and easy to visualize. The more flexible \(J\)-configurations, on
the other hand, are better suited to proofs and structural manipulations,
since alternating walks may repeat vertices and edges and may interact freely
with the underlying blossoms. The theorem therefore separates these two
roles: one may reason and formulate structural questions in terms of the
simpler \(T\)-configurations, while proving the corresponding statements
through the more flexible \(J\)-configurations, without changing the set of
vertices ultimately detected.

The proof is vertex-preserving. Its main ingredients are a structural
analysis of \(J\)-posy traces, an odd-ear absorption argument, and a
reduction of \(J\)-flowers to the posy case. The argument also yields
structural consequences beyond the invariance theorem. In particular, every
prescribed vertex of a connected matchable graph satisfying the strict Hall
condition lies in a \(T\)-posy for a suitable perfect matching. This creates
a natural connection with matching-covered graphs and ear-decomposition
methods.

The paper is organized as follows. \Cref{sec_preliminaries} fixes standard
notation. \Cref{sec_generalized_configurations} introduces the \(T\)-,
\(S\)-, and \(J\)-families of generalized Sterboul--Deming configurations.
\Cref{sec_jposy_traces} studies traces of \(J\)-posies together with
the strict-Hall structure associated with those traces. \Cref{sec_rigid_posies}
proves that every prescribed vertex of
a connected matchable strict-Hall graph lies in a rigid \(T\)-posy for a
suitable perfect matching. \Cref{sec_flower_reduction} reduces \(J\)-flowers
to the rigid framework. \Cref{sec_configuration_invariance} combines these
reductions to prove the configuration-invariance theorem, and
\cref{sec_applications} records further consequences and connections with
matching-covered graphs.

\section{Preliminaries}\label{sec_preliminaries}

Throughout the paper, all graphs are finite, undirected, and simple. For a
graph \(G\), write \(V(G)\) and \(E(G)\) for its vertex and edge sets. For
\(X\subseteq V(G)\), let \(N_G(X)\) denote its open neighborhood and let
\(G-X:=G[V(G)\setminus X]\). When the underlying graph is clear, we omit the
subscript \(G\).

A set \(S\subseteq V(G)\) is independent if no two vertices of \(S\) are
adjacent. A matching is a set of pairwise disjoint edges. A vertex is
\(M\)-\emph{saturated} if it is incident with an edge of \(M\), and
\(M\)-\emph{exposed} otherwise. A matching is perfect if it saturates every
vertex and maximum if it has largest possible cardinality. We denote the
family of maximum matchings of \(G\) by \(\mathcal M(G)\).

Following the usual matching-theoretic convention, we use a matching
interchangeably as an edge set, as the spanning subgraph determined by those
edges, and as an involution. Thus \(M(v)=u\) means that \(uv\in M\), while
\(M(v)=v\) when \(v\) is \(M\)-exposed.

An \(M\)-alternating path or walk alternates between edges in \(M\) and
edges outside \(M\). The length \(|L|\) of a path or walk \(L\) is its number
of edges.

\section{Generalized Sterboul--Deming configurations}
\label{sec_generalized_configurations}

The \(J\)-flower and \(J\)-posy introduced below are new configurations.

Let \(M\) be a matching of \(G\). An edge of \(M\) has status \(m\), while an
edge of \(E(G)\setminus M\) has status \(n\). The type of a non-trivial
\(M\)-alternating path or walk is determined by the status of its first and
last edges in the displayed orientation; thus its type is one of
\[
mm,\qquad mn,\qquad nm,\qquad nn.
\]
  An \(M\)-\emph{blossom} is an odd cycle
\(C\) with exactly \((|C|-1)/2\) edges in \(M\).  Its \emph{base} is the
unique vertex not saturated by the matching edges of \(C\).

An \(M\)-\emph{T-flower} consists of an \(M\)-blossom with base \(b\) and an even alternating path, called the \emph{stem}, from \(b\) to an \(M\)-exposed root \(r\).  The path is allowed to be trivial, meets the blossom only at \(b\), and, when oriented from \(b\) to \(r\), has type \(mn\) when it is non-trivial.

The distinction between the path-based and walk-based objects is illustrated in \cref{fig_allposies}.

\begin{figure}[H]
	\centering
	\begin{subfigure}{0.45\textwidth}
		\centering
		\def \opacidad{0.2}
		\begin{tikzpicture}[thick,scale=0.65,baseline=(0.base)]
			\node[draw,circle, scale=0.6] (0) at (-2,1) {0};
			\node[draw,circle,scale=0.6] (1) at (-4,2) {1};
			\node[draw,circle,scale=0.6] (2) at (-5,1) {2};
			\node[draw,circle,scale=0.6] (3) at (-5,-1) {3};
			\node[draw,circle,scale=0.6] (4) at (-4,-2) {4};
			\node[draw,circle,scale=0.6] (5) at (-3,0) {5};
			\node[draw,circle,scale=0.6] (6) at (-1,3) {6};
			\node[draw,circle,scale=0.6] (7) at (0,2) {7};
			\node[draw,circle,scale=0.6] (8) at (0,0) {8};
			\node[draw,red,circle,scale=0.6,fill = cyan!40!white] (9) at (-2,-1) {{$\mathbf{9}$}};
			
			\foreach \from/\to in {1/2,1/5,3/4,0/6,7/8,9/5,9/4,0/5,5/8,2/0} {
				\path[draw] (\from) -- (\to);
			}
			\foreach \from/\to in {2/3,4/5,1/0,6/7,8/9} {
				\path [draw, decorate, decoration={snake, segment length=3mm, amplitude=4}, very thick, red] (\from) -- (\to);
			}
			\draw[cyan, opacity=2*\opacidad, line width=10pt, rounded corners] 
			(9) -- (5) -- (4) -- (9) -- (8) -- (5)-- (4)-- (3)-- (2)-- (0)-- (1)-- (2)-- (3)-- (4)-- (5)-- (1)-- (0)-- (6)-- (7)-- (8)-- (9);
		\end{tikzpicture}
		\caption{\(M\)-J-posy}
		\label{fig_Jposy}
	\end{subfigure}
	\hfill
	\begin{subfigure}{0.5\textwidth}
		\centering
		\def \opacidad{0.2}
		\begin{tikzpicture}[thick,scale=0.55,baseline=(0.base)]
			\node[draw,red,circle,scale=0.6,fill = cyan!20!white] (0) at (0,0) {$\mathbf{0}$};
			\node[draw,circle,scale=0.5] (1) at (-2,0) {1};
			\node[draw,circle,scale=0.5] (2) at (-2.7,-2) {2};
			\node[draw,circle,scale=0.5] (3) at (-1,-3) {3};
			\node[draw,circle,scale=0.5] (4) at (0.7,-2) {4};
			\node[draw,circle,scale=0.5] (5) at (2,0) {5};
			\node[draw,circle,scale=0.5] (6) at (2,2+1) {6};
			\node[draw,circle,scale=0.5] (7) at (0,2+1) {7};
			\node[draw,circle,scale=0.5] (8) at (-2,2+1) {8};
			\node[draw,circle,scale=0.5] (9) at (4,0) {9};
			\node[draw,circle,scale=0.5] (10) at (4,2+1) {10};
			\node[draw,circle,scale=0.5] (11) at (-4,1) {11};
			\node[draw,circle,scale=0.5,fill = cyan!20!white] (12) at (-4,-3) {$\mathbf{12}$};
			\node[draw,circle,scale=0.5] (13) at (6,0) {13};
			\node[draw,circle,scale=0.5] (14) at (6,2+1) {14};
			\node[draw,circle,scale=0.5] (P) at (2.7,-2) {15};
			\node[draw,circle,scale=0.5] (Q) at (4.7,-2) {16};
			
			\foreach \from/\to in {1/0,0/4,2/3,7/8,5/9,6/10,11/1,2/12,9/13,10/14,4/P,Q/13,11/12,0/8,4/9,1/3,1/7,11/7,6/1} {
				\path[draw, thick] (\from) -- (\to);
			}
			\foreach \from/\to in {0/5,{1/2},3/4,6/7,9/10,8/11,14/13,P/Q} {
				\path [draw, decorate, decoration={snake, segment length=3mm,amplitude=4}, very thick, red] (\from) -- (\to);
			}
			
			\draw[cyan, opacity=\opacidad, line width=10pt, rounded corners] (0) -- (1) -- (2) -- (3) -- (4) -- (0)-- (5)-- (9)-- (10)-- (14)-- (13)-- (9)-- (10)-- (6)-- (7)-- (8)-- (11)-- (7)-- (6)-- (1)--(2)--(12);
		\end{tikzpicture}
		\caption{$M$-J-flower}\label{fig_Jflower}
	\end{subfigure}
	\caption{Examples of posy and flower configurations}
	\label{fig_allposies}
\end{figure}

An \(M\)-\emph{J-flower} is defined in the same way, except that the stem is an alternating walk, possibly trivial, and has type \(mn\) when non-trivial.  It may repeat vertices and edges and may meet the blossom away from its base. In \cref{fig_Jflower}, a maximum matching is shown in red. The marked $M$-J-flower is formed by the blossom \( 0,1,2,3,4,0 \), with base \( 0 \), and the alternating walk from vertex \( 0 \) to vertex \( 12 \) given by the sequence
\[
0,5,9,10,14,13,9,10,14,13,9,10,6,7,8,11,7,6,1,2,12.
\] 

An \(M\)-\emph{S-posy} consists of two distinct \(M\)-blossoms, not necessarily vertex-disjoint, with distinct bases \(b_1,b_2\), and a non-trivial simple alternating path of type \(mm\) from \(b_1\) to \(b_2\). The joining path may meet either blossom away from its endpoints. An \(M\)-\emph{T-posy} is an S-posy in which the joining path meets the two blossoms only at its endpoints and the union has one of the two standard forms: the blossoms are vertex-disjoint, or their intersection is one non-trivial alternating path inherited by both cycles.  In the second form, the common path has type \(mm\), neither base lies on it, and the four remaining cycle arcs from its ends to the two bases have type \(nn\); the joining path is the opposite \(mm\)-link.  Thus a T-posy is an even subdivision of a barbell or of \(K_4\), respectively.  The matching inherited from \(M\) is perfect on its vertex set.

An \(M\)-\emph{J-posy} consists of two, possibly identical, \(M\)-blossoms whose bases are joined by a non-trivial alternating walk of type \(mm\). The walk may repeat vertices and edges and may meet the blossoms arbitrarily.  The marked $M$-J-posy in \cref{fig_Jposy} is formed by the blossom \( 9,5,4,9 \), whose base is vertex \( 9 \), and the alternating walk:
\[
9,8,5,4,3,2,0,1,2,3,4,5,1,0,6,7,8,9.
\]

An \emph{even subdivision} of a graph \(G\) is a graph obtained from \(G\) by subdividing some of its edges, each an even number of times; equivalently, each edge of \(G\) is replaced by a path of odd length, with paths of length one allowed.  The base barbell is formed by two disjoint triangles joined by one edge; its even subdivisions are two disjoint odd cycles joined by an odd path.

For a graph \(G\), define
\begin{equation}\label{eq_SD_vertex_sets}
\begin{aligned}
\SD_T(G)=\{v:\;&v\text{ lies in an }M\text{-T-flower or }M\text{-T-posy}\\
             &\text{for some }M\in\mathcal M(G)\},\\
\SD_S(G)=\{v:\;&v\text{ lies in an }M\text{-T-flower or }M\text{-S-posy}\\
             &\text{for some }M\in\mathcal M(G)\},\\
\SD_J(G)=\{v:\;&v\text{ lies in an }M\text{-J-flower or }M\text{-J-posy}\\
             &\text{for some }M\in\mathcal M(G)\}.
\end{aligned}
\end{equation}
The notation in \cref{eq_SD_vertex_sets} records the vertices detected by the
three configuration families. Directly from the definitions,
\begin{equation}\label{eq_SD_inclusions}
\SD_T(G)\subseteq\SD_S(G)\subseteq\SD_J(G).
\end{equation}
The main theorem will show that both inclusions in
\cref{eq_SD_inclusions} are equalities.

The main advantage of this generalization is the flexibility of J-posy configurations. For instance, with respect to the red matching in \cref{fig_Jposy_1}, there is an \(M\)-J-posy consisting of the \(M\)-blossoms \(0,1,6,7,2,0\) with base \(0\), and \(5,6,1,2,7,5\) with base \(5\), together with the alternating path $0,3,4,5$. For this matching, however, there is no \(M\)-T-posy.

A second example is shown in \cref{fig_Jposy_2}. There is an \(M\)-J-posy consisting of the \(M\)-blossoms \(0,1,2,0\) with base \(0\), and \(11,12,13,11\) with base \(11\), together with the alternating walk
\[
0,3,4,5,6,7,5,4,8,11,10,9,8,11.
\]
This single \(M\)-J-posy covers every vertex of the graph.

The preceding two examples are displayed together in \cref{fig_allposies_1}.

\begin{figure}[H]
	\centering
	\begin{subfigure}{0.45\textwidth}
		\centering
		\def \opacidad{0.2}
		\begin{tikzpicture}[thick,scale=0.65,baseline=(0.base)]
			\node[draw,red,circle,scale=0.6,fill = cyan!40!white] (1) at (-2,1) {{$\mathbf{0}$}};
			\node[draw,circle,scale=0.6] (2) at (0,2) {1};
			\node[draw,circle,scale=0.6] (3) at (0,0) {2};
			\node[draw,circle,scale=0.6] (4) at (0,4) {3};
			\node[draw,circle,scale=0.6] (5) at (2,4) {4};
			\node[draw,red,circle,scale=0.6,fill = cyan!40!white] (6) at (4,1) {{$\mathbf{5}$}};
			\node[draw,circle,scale=0.6] (7) at (2,2) {6};
			\node[draw,circle,scale=0.6] (8) at (2,0) {7};

			\foreach \from/\to in {1/2,1/3,3/2,4/5,7/6,6/8,7/8} {
				\path[draw] (\from) -- (\to);
			}
			\foreach \from/\to in {2/7,3/8,1/4,6/5} {
				\path [draw, decorate, decoration={snake, segment length=3mm, amplitude=4}, very thick, red] (\from) -- (\to);
			}
			\draw[cyan, opacity=2*\opacidad, line width=10pt, rounded corners] 
			(1) -- (2) -- (7) -- (8) -- (3) -- (1)-- (4)-- (5)-- (6)-- (7)-- (2)-- (3)-- (8)-- (6);
		\end{tikzpicture}
		\caption{\(M\)-J-posy}
		\label{fig_Jposy_1}
	\end{subfigure}
	\hfill
	\begin{subfigure}{0.5\textwidth}
		\centering
		\def \opacidad{0.2}
		\begin{tikzpicture}[thick,scale=0.65,baseline=(0.base)]
			\node[draw,red,circle,scale=0.6,fill = cyan!40!white] (0) at (-3,1) {{$\mathbf{0}$}};
			\node[draw,circle,scale=0.6] (1) at (-4,0) {1};
			\node[draw,circle,scale=0.6] (2) at (-2,0) {2};
			\node[draw,circle,scale=0.6] (3) at (-3,3) {3};
			\node[draw,circle,scale=0.6] (4) at (-1,2) {4};
			\node[draw,circle,scale=0.6] (5) at (0,3) {5};
			\node[draw,circle,scale=0.6] (6) at (2,4) {6};
			\node[draw,circle,scale=0.6] (7) at (0,4.5) {7};
			\node[draw,circle,scale=0.6] (8) at (0,1) {8};
			\node[draw,circle,scale=0.6] (9) at (1,2) {9};
			\node[draw,circle,scale=0.6] (10) at (3,2) {10};
			\node[draw,red,circle,scale=0.6,fill = cyan!40!white] (11) at (2,1) {{$\mathbf{11}$}};
			\node[draw,circle,scale=0.6] (12) at (4.5,1) {12};
			\node[draw,circle,scale=0.6] (13) at (3,0) {13};

			\foreach \from/\to in {1/0,0/2,3/4,4/8,5/6,5/7,9/8,11/10,11/12,11/13} {
				\path[draw] (\from) -- (\to);
			}
			\foreach \from/\to in {1/2,3/0,5/4,6/7,8/11,9/10,12/13} {
				\path [draw, decorate, decoration={snake, segment length=3mm, amplitude=4}, very thick, red] (\from) -- (\to);
			}
			\draw[cyan, opacity=2*\opacidad, line width=10pt, rounded corners] 
			(0)-- (1) -- (2) -- (0) -- (3) -- (4) -- (5)-- (6)-- (7)-- (5)-- (4)-- (8)-- (11)-- (10)-- (9)-- (8)-- (11)-- (12)-- (13)-- (11);
		\end{tikzpicture}
		\caption{$M$-J-posy}\label{fig_Jposy_2}
	\end{subfigure}
	\caption{Examples of posy configurations}
	\label{fig_allposies_1}
\end{figure}

\section{Traces of \(J\)-posies and strict Hall structure}
\label{sec_jposy_traces}

The \emph{trace} of a configuration is the subgraph formed by all vertices
and edges used by the configuration.

\begin{lemma}\label{lem_trace_perfect}
Let \(H\) be the trace of an \(M\)-\(J\)-posy and set
\[
M_H:=M\cap E(H).
\]
Then \(M_H\) is a perfect matching of \(H\). Consequently, no edge of \(M\)
has exactly one endpoint in \(V(H)\).
\end{lemma}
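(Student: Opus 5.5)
The plan is to verify directly that every vertex of \(H\) is covered by some edge of \(M_H\). Since \(M_H\subseteq M\) is automatically a matching, this is exactly perfection of \(M_H\) on \(H\). The consequence about edges of \(M\) will then follow from uniqueness of matching partners.

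Fix the \(M\)-\(J\)-posy: blossoms \(C_1,C_2\) (possibly equal) with bases \(b_1,b_2\), and a non-trivial \(M\)-alternating walk \(W=w_0w_1\cdots w_k\) of type \(mm\) with \(w_0=b_1\) and \(w_k=b_2\). By definition of the trace,
\[
V(H)=V(C_1)\cup V(C_2)\cup\{w_0,\dots,w_k\},
\]
and \(E(H)\) consists of the edges of \(C_1\), \(C_2\) and \(W\). I will treat each kind of vertex of \(H\) in turn.

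\emph{Blossom vertices other than the base.} Let \(v\in V(C_i)\) with \(v\neq b_i\). Since \(C_i\) has exactly \((|C_i|-1)/2\) edges of \(M\), these edges cover every vertex of \(C_i\) except \(b_i\). So \(v\) is incident with an \(M\)-edge of \(C_i\), which lies in \(E(H)\).

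\emph{Bases.} Because \(W\) is non-trivial of type \(mm\), its first edge \(w_0w_1\) and its last edge \(w_{k-1}w_k\) lie in \(M\). Hence \(b_1\) and \(b_2\) are each covered by an \(M\)-edge of \(W\). This also covers the case \(k=1\), where the single edge is in \(M\), and the case \(C_1=C_2\), where \(b_1=b_2\).

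\emph{Walk vertices.} Consider an interior occurrence \(w_j\) with \(0<j<k\). Since \(W\) is alternating, exactly one of the edges \(w_{j-1}w_j\) and \(w_jw_{j+1}\) is in \(M\), so \(w_j\) is covered by an \(M\)-edge of \(W\). The endpoint occurrences \(w_0,w_k\) were handled above. Repeated occurrences cause no difficulty, because it suffices that one occurrence is covered.

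Combining the three cases, every vertex of \(H\) is incident with an edge of \(M\cap E(H)\), so \(M_H\) is a perfect matching of \(H\).

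\emph{Consequence.} Suppose \(uv\in M\) with \(u\in V(H)\). By the above, \(u\) is incident with some edge \(uv'\in M_H\), so \(v'\in V(H)\). Since \(M\) is a matching, \(u\) has a unique \(M\)-partner, so \(v'=v\) and therefore \(v\in V(H)\).

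The only delicate point is the bases. Their coverage does not come from the blossoms at all; it comes from the \(mm\) type of \(W\). One must also check that the degenerate cases, \(C_1=C_2\) and \(k=1\), are still covered, and they are, as noted above.
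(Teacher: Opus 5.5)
Your proof is correct and follows essentially the same route as the paper's. You cover each vertex of \(H\) by an \(M\)-edge of a blossom or of the type-\(mm\) walk (bases via its first and last edges), note that \(M_H\subseteq M\) is automatically a matching, and get the consequence from uniqueness of \(M\)-partners, exactly as the paper does.
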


\begin{proof}
There are four possible sources of overlap to consider: repeated vertices
of the joining walk, intersections between the joining walk and a blossom,
intersections between the two blossoms, and coincident bases or blossoms.
In each case, any edges of \(M\) incident with the same vertex must
coincide, since \(M\) is a matching.

Let \(v\in V(H)\). If \(v\) occurs in one of the blossoms away from its
base, then the blossom contains an edge of \(M\) incident with \(v\).
If \(v\) occurs on the joining walk, then, since the walk is
\(M\)-alternating of type \(mm\), every occurrence of \(v\), including
an endpoint, is incident with an edge of \(M\) used by the walk.
Thus every vertex of \(H\) is incident with an edge of
\(M\cap E(H)\). By the preceding observation, overlaps among the
constituent objects cannot produce distinct edges of \(M\) at the same
vertex. Hence \(M_H\) is a perfect matching of \(H\).

If an edge of \(M\) had exactly one endpoint in \(V(H)\), its endpoint in
\(H\) would be incident with two distinct edges of \(M\), one in \(M_H\)
and one leaving \(H\), a contradiction.
\end{proof}

Let \(G\) have a perfect matching \(M\). The \emph{alternating digraph}
\(D_M(G)\) has vertex set \(V(G)\), with
\[
x\longrightarrow y
\quad\Longleftrightarrow\quad
xM(y)\in E(G)\setminus M.
\]
Thus an arc \(x\to y\) records the two-edge \(M\)-alternating step
\(x,M(y),y\), whose first edge lies outside \(M\) and whose second edge
lies in \(M\).

For \(X\subseteq V(G)\), write
\[
M(X):=\{M(x):x\in X\}.
\]
Observe that
\[
x\longrightarrow y
\quad\Longleftrightarrow\quad
M(y)\longrightarrow M(x).
\]
Thus the involution \(M\) reverses directed walks in \(D_M(G)\).

A graph \(G\) satisfies the \emph{strict Hall condition} if
\begin{equation}\label{eq_strict_hall}
 |N_G(S)|>|S|
\end{equation}
for every nonempty independent set \(S\subseteq V(G)\). We shall simply
say that \(G\) satisfies \emph{strict Hall}.

\begin{lemma}[Sink Component Lemma]\label{lem_sink_component}
Let \(G\) be a connected graph with a perfect matching \(M\), and let
\(K\) be a sink strong component of \(D_M(G)\). Then either
\[
K=V(G),
\]
or \(K\) is independent and
\[
N_G(K)=M(K).
\]
\end{lemma}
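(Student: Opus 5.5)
The plan is to exploit the sink property directly: no arc of \(D_M(G)\) leaves \(K\). Unwinding the definition, for every \(x\in K\) and every non-matching edge \(xz\in E(G)\setminus M\), the vertex \(y:=M(z)\) satisfies \(x\to y\), so \(y\in K\) and hence \(z\in M(K)\). Together with the matching edge \(xM(x)\), which also ends in \(M(K)\), this gives
\[
N_G(K)\subseteq M(K).
\]
This containment is the main tool. I then split according to whether \(K\) meets \(M(K)\).

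\emph{Case 1: \(K\cap M(K)=\emptyset\).} Any edge \(xx'\) with \(x,x'\in K\) would put \(x'\in N_G(K)\subseteq M(K)\), which is impossible. So \(K\) is independent. For the reverse containment, each \(M(x)\) with \(x\in K\) is a neighbour of \(x\) via the matching edge. Hence \(M(K)\subseteq N_G(K)\), and so \(N_G(K)=M(K)\).

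\emph{Case 2: \(K\cap M(K)\neq\emptyset\).} The goal here is \(K=V(G)\), and I would proceed in three steps.

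\begin{enumerate}
\item Show that \(M(K)\) is a source strong component, using the reversal \(x\to y\iff M(y)\to M(x)\). This involution maps strong components to strong components. Arcs entering \(M(K)\) correspond to arcs leaving \(K\), and there are none.
\item Show \(K=M(K)\). The two sets are strong components that share a vertex, so they coincide. Thus \(K\) is closed under \(M\).
\item Use connectivity to get \(K=V(G)\). Since \(K=M(K)\), we have \(N_G(K)\subseteq M(K)=K\). So no edge of \(G\) leaves \(K\), and as \(G\) is connected and \(K\neq\emptyset\), \(K=V(G)\).
\end{enumerate}

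I expect no serious obstacle. The only delicate points are reading the arc definition in the right direction to obtain \(N_G(K)\subseteq M(K)\), and checking in step 1 that the involution really maps strong components to strong components. For the latter, strong connectivity is preserved because \(M\) reverses every directed walk. Finally, the vertex \(x\) itself is never in \(N_G(K)\) by loop-freeness, so no degenerate cases arise.
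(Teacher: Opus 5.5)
Your proof is correct and follows essentially the same route as the paper: both show that \(M(K)\) is again a strong component, so \(K\cap M(K)\neq\varnothing\) forces \(K=M(K)\) and connectivity gives \(K=V(G)\), while in the disjoint case the sink property gives independence and \(N_G(K)=M(K)\). The differences are only organizational. You derive the containment \(N_G(K)\subseteq M(K)\) once, up front, and use it in both cases, whereas the paper treats matching and non-matching edges separately. Your extra observation that \(M(K)\) is a source component is true but not needed.
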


\begin{proof}
We first note that \(M(K)\) is itself a strong component. Indeed, if
\(u,v\in K\), then directed walks in both directions between \(u\) and
\(v\) are reversed by \(M\), so \(M(u)\) and \(M(v)\) are mutually
reachable. Hence \(M(K)\) is strongly connected. If a vertex \(z\) were
mutually reachable with \(M(K)\), then \(M(z)\) would be mutually
reachable with \(K\). By maximality of \(K\), we would have \(M(z)\in K\),
and therefore \(z\in M(K)\).

Suppose first that \(K\cap M(K)\neq\varnothing\). Since \(K\) and
\(M(K)\) are strong components, they must coincide. Thus \(M(K)=K\).
No edge of \(M\) can leave \(K\). If a nonmatching edge \(xy\) had
\(x\in K\) and \(y\notin K\), then
\[
x\longrightarrow M(y).
\]
Since \(K\) is a sink component, \(M(y)\in K\). But \(M(K)=K\), so
\(y\in K\), a contradiction. Hence no edge of \(G\) leaves \(K\).
Since \(G\) is connected, \(K=V(G)\).

Suppose now that \(K\cap M(K)=\varnothing\). We claim that \(K\) is
independent. A matching edge with both endpoints in \(K\) would place one
of them in \(M(K)\), a contradiction. If \(xy\in E(G)\setminus M\) with
\(x,y\in K\), then
\[
x\longrightarrow M(y).
\]
Since \(K\) is a sink component, \(M(y)\in K\), again contradicting
\(K\cap M(K)=\varnothing\). Thus \(K\) is independent.

It remains to determine its neighborhood. Let \(y\in N_G(K)\), and choose
\(x\in K\) adjacent to \(y\). If \(xy\in M\), then \(y=M(x)\in M(K)\).
If \(xy\notin M\), then \(x\to M(y)\), and the sink property gives
\(M(y)\in K\), whence \(y\in M(K)\). Therefore
\[
N_G(K)\subseteq M(K).
\]
The reverse inclusion is immediate because \(M(x)\) is adjacent to \(x\)
for every \(x\in K\). Hence \(N_G(K)=M(K)\).
\end{proof}

\begin{lemma}[Strict Hall--Strong Connectivity Lemma]
\label{lem_strict_hall_strong}
Let \(G\) be a connected graph with a perfect matching \(M\). Then
\begin{equation}\label{eq_strict_hall_strong}
G\text{ satisfies strict Hall}
\quad\Longleftrightarrow\quad
D_M(G)\text{ is strongly connected}.
\end{equation}
\end{lemma}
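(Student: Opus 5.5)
We prove the two implications separately. The forward direction uses the Sink Component Lemma (\cref{lem_sink_component}). The reverse direction uses a closure argument: if \(D_M(G)\) is strongly connected, an independent set with \(|N_G(S)|=|S|\) would have to be closed under arcs.

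\emph{Strict Hall implies strong connectivity.} Suppose \(G\) satisfies strict Hall. The condensation of the finite digraph \(D_M(G)\) is acyclic, so it has at least one sink strong component \(K\). By \cref{lem_sink_component}, either \(K=V(G)\), or \(K\) is a nonempty independent set with \(N_G(K)=M(K)\). In the second case \(|N_G(K)|=|M(K)|=|K|\), since \(M\) is an involution without fixed points on a perfect matching. This contradicts \eqref{eq_strict_hall}. Hence \(K=V(G)\) is the only sink component, so the whole digraph is one strong component, that is, \(D_M(G)\) is strongly connected.

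\emph{Strong connectivity implies strict Hall.} Suppose \(D_M(G)\) is strongly connected, and let \(S\) be a nonempty independent set.
\begin{enumerate}[label=(\roman*)]
\item Since \(S\) is independent, \(M(S)\cap S=\varnothing\), and every \(x\in S\) is adjacent to \(M(x)\). Therefore \(M(S)\subseteq N_G(S)\), which gives \(|N_G(S)|\ge |S|\).
\item Suppose equality holds. Then \(N_G(S)=M(S)\). We show that \(S\) is closed under out-arcs. Let \(x\in S\) and \(x\to y\). Then \(xM(y)\) is a nonmatching edge, so \(M(y)\in N_G(S)=M(S)\), and applying the involution gives \(y\in S\).
\item A nonempty set closed under out-arcs in a strongly connected digraph must contain every vertex, since every vertex is reachable from a vertex of \(S\). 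Hence \(S=V(G)\).
\item But \(S\) is disjoint from the nonempty set \(M(S)\), so \(S\neq V(G)\). This contradiction shows \(|N_G(S)|>|S|\).
\end{enumerate}

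The only step needing care is the equality case in the reverse direction. There one must check that \(N_G(S)=M(S)\) really yields closure under arcs, which requires that the arc \(x\to y\) uses a nonmatching edge at \(x\) and that \(M\) is injective. Connectivity of \(G\) is used only through \cref{lem_sink_component} in the forward direction.
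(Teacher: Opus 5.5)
Your proof is correct and essentially matches the paper's. One direction applies the Sink Component Lemma to a sink strong component, where \(N_G(K)=M(K)\) contradicts strict Hall. The other shows that equality \(N_G(S)=M(S)\) makes \(S\) closed under out-arcs, which strong connectivity rules out because \(S\) is a proper subset of \(V(G)\).
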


\begin{proof}
Suppose first that \(D_M(G)\) is strongly connected. Let
\(S\subseteq V(G)\) be a nonempty independent set. Since \(M\) is
perfect,
\[
M(S)\subseteq N_G(S)
\qquad\text{and}\qquad
|M(S)|=|S|.
\]
Thus \(|N_G(S)|\ge |S|\). Suppose equality holds. Then
\[
N_G(S)=M(S).
\]
If \(x\to y\) with \(x\in S\), then
\(xM(y)\in E(G)\setminus M\), so
\[
M(y)\in N_G(S)=M(S).
\]
Since \(M\) is an involution, \(y\in S\). Thus no arc of \(D_M(G)\)
leaves \(S\). Since \(M\) is perfect, the independent set \(S\) is a
proper subset of \(V(G)\), contradicting strong connectivity. Therefore
\cref{eq_strict_hall} holds.

Conversely, suppose that \(G\) satisfies strict Hall but \(D_M(G)\) is
not strongly connected. Choose a sink strong component \(K\). It is
proper, so by \cref{lem_sink_component}, \(K\) is independent and
\[
N_G(K)=M(K).
\]
Consequently,
\[
|N_G(K)|=|M(K)|=|K|,
\]
contradicting strict Hall.
\end{proof}

The next two lemmas describe how the basic pieces of a \(J\)-posy appear
inside the alternating digraph.

\begin{lemma}[Blossom Projection Lemma]\label{lem_blossom_projection}
Let \(G\) have a perfect matching \(M\), and let \(C\) be an
\(M\)-blossom with base \(b\). Put \(p=M(b)\). Then \(D_M(G)\) contains
two directed \(b\)-\(p\) paths whose union contains every vertex of \(C\).
\end{lemma}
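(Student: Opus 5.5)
The plan is to write the two paths down explicitly by going around the cycle in each direction, using only the definition of the arcs of \(D_M(G)\).

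Label the blossom as \(C=c_0c_1\cdots c_{2k}c_0\) with \(c_0=b\) and \(k\ge 1\). Its matching edges are \(c_1c_2,c_3c_4,\dots,c_{2k-1}c_{2k}\), so the edges \(c_0c_1,\;c_{2j}c_{2j+1}\) and \(c_{2k}c_0\) of \(C\) lie outside \(M\). Since \(M\) is perfect, \(b\) has a partner \(p=M(b)\). This partner is not on \(C\), because every other vertex of \(C\) is already matched by an edge of \(C\). I will use the definition \(x\to y \iff xM(y)\in E(G)\setminus M\) directly.

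\emph{First path} (through the even-indexed vertices):
\[
b=c_0\to c_2\to c_4\to\cdots\to c_{2k}\to p .
\]
The arcs are justified as follows.
\begin{itemize}
\item \(c_0\to c_2\) holds because \(c_0M(c_2)=c_0c_1\notin M\).
\item \(c_{2j}\to c_{2j+2}\) holds because \(c_{2j}M(c_{2j+2})=c_{2j}c_{2j+1}\notin M\).
\item \(c_{2k}\to p\) holds because \(c_{2k}M(p)=c_{2k}b\notin M\).
\end{itemize}

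\emph{Second path} (through the odd-indexed vertices, in the other direction):
\[
b\to c_{2k-1}\to c_{2k-3}\to\cdots\to c_1\to p .
\]
The arcs are justified as follows.
\begin{itemize}
\item \(b\to c_{2k-1}\) holds because \(bM(c_{2k-1})=bc_{2k}\notin M\).
\item \(c_{2j+1}\to c_{2j-1}\) holds because \(c_{2j+1}M(c_{2j-1})=c_{2j+1}c_{2j}\notin M\).
\item \(c_1\to p\) holds because \(c_1M(p)=c_1b\notin M\).
\end{itemize}

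Each sequence lists distinct vertices: distinct cycle vertices, together with \(b\) and \(p\notin C\). Hence both are directed paths and not merely walks. Their union contains \(b\), all even-indexed \(c_i\), and all odd-indexed \(c_i\), that is, every vertex of \(C\), which proves the lemma.

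There is no real obstacle. The only points needing care are the indexing, so that each claimed arc corresponds to a non-matching edge of \(C\), and the observation that \(p\notin V(C)\), which guarantees that the sequences are genuine paths. The triangle case \(k=1\) is covered by the same formulas: the paths are \(b\to c_2\to p\) and \(b\to c_1\to p\).
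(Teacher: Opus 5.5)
Your proposal is correct and follows essentially the same route as the paper's proof: the same labelling of the blossom, and the same two explicit directed paths, one through the even-indexed vertices and one through the odd-indexed vertices in reverse order, each ending at \(p\). Your extra check that \(p\notin V(C)\), which makes both sequences genuine paths rather than walks, fills a small detail that the paper leaves implicit.
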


\begin{proof}
Write
\[
C=b,v_1,v_2,\ldots,v_{2t},b
\]
so that
\[
v_1v_2,v_3v_4,\ldots,v_{2t-1}v_{2t}\in M.
\]
The remaining edges of \(C\) lie outside \(M\). Since \(p=M(b)\), the
definition of \(D_M(G)\) gives the directed paths
\[
b\to v_2\to v_4\to\cdots\to v_{2t}\to p
\]
and
\[
b\to v_{2t-1}\to v_{2t-3}\to\cdots\to v_1\to p.
\]
Their union contains every vertex of \(C\).
\end{proof}

\begin{lemma}[Alternating-Walk Projection Lemma]
\label{lem_walk_projection}
Let \(G\) have a perfect matching \(M\), and let
\[
W=u_0,u_1,\ldots,u_{2\ell+1}
\]
be an \(M\)-alternating walk of type \(mm\), in its displayed
orientation. Then \(D_M(G)\) contains the directed walks
\[
u_1\to u_3\to\cdots\to u_{2\ell+1}
\]
and
\[
u_{2\ell}\to u_{2\ell-2}\to\cdots\to u_0.
\]
Together they contain every vertex occurring in \(W\).
\end{lemma}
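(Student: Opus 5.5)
The plan is to read the two directed walks straight off the definition of \(D_M(G)\), using the fact that \(W\) alternates with matching edges in the odd positions. Since \(W\) has type \(mm\) and is \(M\)-alternating, its edges \(u_{2i}u_{2i+1}\) lie in \(M\) for \(0\le i\le \ell\), and its edges \(u_{2i+1}u_{2i+2}\) lie outside \(M\) for \(0\le i\le \ell-1\). Because \(M\) is perfect, viewing it as an involution gives
\[
M(u_{2i})=u_{2i+1},\qquad M(u_{2i+1})=u_{2i},\qquad 0\le i\le\ell .
\]
This identity holds even when vertices repeat along the walk. The reason is that \(M\) is a matching: if a vertex occurs at several positions, each occurrence forces the same partner, so the assignment is consistent.

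For the forward walk, I fix \(0\le i\le \ell-1\) and look at the arc \(u_{2i+1}\to u_{2i+3}\). By definition this arc exists exactly when \(u_{2i+1}M(u_{2i+3})=u_{2i+1}u_{2i+2}\) lies in \(E(G)\setminus M\), and it does because it is a nonmatching edge of \(W\). Chaining these arcs gives the directed walk \(u_1\to u_3\to\cdots\to u_{2\ell+1}\).

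For the backward walk, I fix \(1\le i\le \ell\) and look at the arc \(u_{2i}\to u_{2i-2}\). This arc requires \(u_{2i}M(u_{2i-2})=u_{2i}u_{2i-1}\notin M\), and that is again a nonmatching edge of \(W\), namely the edge in position \(2i-1\). Chaining gives \(u_{2\ell}\to\cdots\to u_0\). An alternative is to apply the reversal symmetry \(x\to y\iff M(y)\to M(x)\) to the forward walk, which yields the backward walk directly.

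Coverage is then immediate: the odd-indexed occurrences of \(W\) are the vertices of the first walk and the even-indexed occurrences are the vertices of the second. When \(\ell=0\) both walks are trivial, and they still cover \(u_0\) and \(u_1\).

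There is no real obstacle here. The only point needing care is the bookkeeping of indices and the observation that repeated vertices do not break the identity \(M(u_{2i})=u_{2i+1}\). The statement produces walks rather than paths precisely so that repetitions are harmless.
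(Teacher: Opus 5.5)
Your proposal is correct and follows the paper's proof essentially verbatim. Like the paper, you read off that the even-position edges of \(W\) lie in \(M\) and the odd-position edges do not, verify the arcs \(u_{2i+1}\to u_{2i+3}\) and \(u_{2i}\to u_{2i-2}\) directly from the definition of \(D_M(G)\), and get coverage from the parity of indices, including the trivial case \(\ell=0\); your alternative derivation of the backward walk via the reversal symmetry \(x\to y\iff M(y)\to M(x)\) is a valid extra.
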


\begin{proof}
Since \(W\) has type \(mm\),
\[
u_0u_1,u_2u_3,\ldots,u_{2\ell}u_{2\ell+1}\in M,
\]
while the intervening edges lie outside \(M\). Hence, for each applicable
index,
\[
u_{2i+1}\to u_{2i+3}
\qquad\text{and}\qquad
u_{2i}\to u_{2i-2}.
\]
These arcs give the two asserted directed walks. Their vertex occurrences
have respectively odd and even indices, and hence together contain every
vertex occurring in \(W\). The statement remains valid when vertices or
edges of \(W\) are repeated; when \(\ell=0\), the two projected walks are
trivial.
\end{proof}

\begin{theorem}\label{thm_jposy_trace_strict_hall}
Let \(H\) be the trace of an \(M\)-\(J\)-posy. Then \(H\) is connected,
\(M_H\) is a perfect matching of \(H\), and
\(D_{M_H}(H)\) is strongly connected. Consequently, \(H\) satisfies the
strict Hall condition.
\end{theorem}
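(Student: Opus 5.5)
Connectivity of \(H\) is immediate: the two blossoms are connected cycles, and the joining walk is connected and contains both bases. Hence the union of these three connected pieces is connected. By \cref{lem_trace_perfect}, \(M_H\) is a perfect matching of \(H\). Once strong connectivity of \(D_{M_H}(H)\) is established, the strict Hall condition follows from \cref{lem_strict_hall_strong} applied to the connected graph \(H\) with perfect matching \(M_H\). So the real content is strong connectivity. Note that every edge used below is an edge of \(H\), and its status relative to \(M_H\) agrees with its status relative to \(M\). Therefore the arcs produced by \cref{lem_blossom_projection,lem_walk_projection} inside \(H\) are genuine arcs of \(D_{M_H}(H)\).

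Let the blossoms be \(C_1,C_2\) with bases \(b_1,b_2\), and put \(p_i=M(b_i)\). Write the joining walk as \(W=u_0,\dots,u_{2\ell+1}\) with \(u_0=b_1\) and \(u_{2\ell+1}=b_2\). Because \(W\) has type \(mm\), we get \(u_1=p_1\) and \(u_{2\ell}=p_2\). \Cref{lem_walk_projection} gives a directed walk \(p_1\to\cdots\to b_2\) and a directed walk \(p_2\to\cdots\to b_1\), and together they cover all vertices of \(W\). \Cref{lem_blossom_projection} gives, for each \(i\), directed \(b_i\)-\(p_i\) paths covering \(C_i\). Concatenating, we obtain the closed directed walk
\[
b_1\leadsto p_1\leadsto b_2\leadsto p_2\leadsto b_1 .
\]
Here the first leg is inside \(C_1\), the second is the odd-index projection of \(W\), the third is inside \(C_2\), and the fourth is the even-index projection of \(W\).

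Every vertex of \(H\) lies on some directed walk in this closed structure. The vertices of \(C_i\) lie on one of the two \(b_i\)-\(p_i\) paths, each of which can be inserted as the corresponding leg. The vertices of \(W\) lie on one of the two projections. Hence every vertex lies on a closed directed walk through \(b_1\), so every vertex is mutually reachable with \(b_1\), and \(D_{M_H}(H)\) is strongly connected.

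The main point requiring care is the degenerate cases. When \(C_1=C_2\), or \(b_1=b_2\), the same argument runs with a single blossom, and the closed walk becomes \(b\leadsto p\leadsto b\leadsto p\leadsto b\). When \(\ell=0\), we have \(b_2=p_1\) and \(p_2=b_1\); the projected walks are trivial, and the cycle still closes. I would also check that repetitions in \(W\) cause no trouble, since \cref{lem_walk_projection} already permits them. Finally, I would confirm that \(M_H\)-alternation is inherited from \(M\), so the arcs are correct. This holds because \(M_H=M\cap E(H)\) and all edges involved lie in \(H\).
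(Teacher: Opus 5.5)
Your proof is correct and follows essentially the same route as the paper. Both arguments restrict to \(M_H\), use \cref{lem_walk_projection} and \cref{lem_blossom_projection} to place \(b_1,p_1,b_2,p_2\) in a single strong component (your closed walk \(b_1\leadsto p_1\leadsto b_2\leadsto p_2\leadsto b_1\)), note that every vertex of \(H\) lies on one of these projected legs, and then obtain strict Hall from \cref{lem_strict_hall_strong}. One small wording point: \(b_1=b_2\) does not force \(C_1=C_2\), since two distinct blossoms may share a base, so that case is not literally ``a single blossom.'' Your general argument never uses distinctness, however, and covers this case unchanged.
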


\begin{proof}
Connectedness follows from the definition of a \(J\)-posy, and
\cref{lem_trace_perfect} gives that \(M_H\) is a perfect matching of
\(H\).

Let the two blossoms have bases \(b_1,b_2\), and put
\[
p_i:=M_H(b_i),\qquad i=1,2.
\]
Since every edge of the two blossoms and of the joining walk belongs to
\(H\), its \(M\)-status is unchanged under restriction to \(M_H\). Hence
the two \(M\)-blossoms are also \(M_H\)-blossoms, the joining walk remains
\(M_H\)-alternating of type \(mm\), and \(M_H(b_i)=M(b_i)\) for
\(i=1,2\).

Since the joining walk has type \(mm\), its first edge is \(b_1p_1\) and
its last edge is \(p_2b_2\). By \cref{lem_walk_projection}, the alternating
digraph \(D_{M_H}(H)\) contains directed walks
\[
p_1\leadsto b_2
\qquad\text{and}\qquad
p_2\leadsto b_1.
\]
By \cref{lem_blossom_projection}, it also contains directed paths
\[
b_1\leadsto p_1
\qquad\text{and}\qquad
b_2\leadsto p_2.
\]
Thus \(b_1,p_1,b_2,p_2\) lie in a common strong component \(K\) of
\(D_{M_H}(H)\).

Let \(v\) be a vertex of either blossom. By
\cref{lem_blossom_projection}, \(v\) lies on a directed
\(b_i\)-\(p_i\) path for the corresponding blossom. Since both endpoints
belong to \(K\), so does \(v\).

Now let \(v\) be a vertex occurring on the joining walk. By
\cref{lem_walk_projection}, \(v\) lies on one of the directed walks
\[
p_1\leadsto b_2
\qquad\text{or}\qquad
p_2\leadsto b_1.
\]
Again both endpoints belong to \(K\), and hence \(v\in K\).

Every vertex of the trace belongs to one of the two blossoms or to the
joining walk. Therefore \(K=V(H)\), and \(D_{M_H}(H)\) is strongly
connected. The strict Hall condition now follows from
\cref{lem_strict_hall_strong}.

The argument does not require the two blossoms to be disjoint or distinct,
nor the joining walk to be simple. Coincident bases, intersections between
the blossoms and the joining walk, and repeated vertices or edges merely
identify vertices already lying in the same strong component.
\end{proof}

\section{Rigid posies in strict-Hall graphs}\label{sec_rigid_posies}

The finite parity analysis in the next lemma uses the two rigid geometries of a T-posy. The \(K_4\)-type geometry and the three orbits of ear endpoints used there are displayed in \cref{fig_k4_tposy}.

\begin{figure}[H]
\centering
\begin{subfigure}{0.61\textwidth}
\centering
\def\opacidad{0.20}
\begin{tikzpicture}[thick,scale=.93,baseline=(current bounding box.center),
    branch/.style={draw,circle,fill=white,inner sep=1.4pt,font=\scriptsize},
    sub/.style={draw,circle,fill=white,inner sep=.7pt},
    medge/.style={draw,decorate,decoration={snake,segment length=2.4mm,amplitude=2.7},very thick,red},
    nedge/.style={draw,thick}]
  \begin{scope}[rotate=25]
    \coordinate (v1) at (0,2.65);
    \coordinate (v2) at (-2.65,0);
    \coordinate (v3) at (2.65,0);
    \coordinate (v4) at (0,-2.45);

    \path (v1)--(v2) coordinate[pos=.34] (a12) coordinate[pos=.68] (b12);
    \path (v1)--(v3) coordinate[pos=.34] (a13) coordinate[pos=.68] (b13);
    \path (v1)--(v4) coordinate[pos=.34] (a14) coordinate[pos=.68] (b14);
    \path (v2)--(v3) coordinate[pos=.34] (a23) coordinate[pos=.68] (b23);
    \path (v2)--(v4) coordinate[pos=.34] (a24) coordinate[pos=.68] (b24);
    \path (v3)--(v4) coordinate[pos=.34] (a34) coordinate[pos=.68] (b34);

    \draw[cyan,opacity=.23,line width=12pt,rounded corners]
      (v1)--(a12)--(b12)--(v2)--(a23)--(b23)--(v3)--(b13)--(a13)--(v1);
    \draw[green!60!black,opacity=.18,line width=12pt,rounded corners]
      (v1)--(a14)--(b14)--(v4)--(b34)--(a34)--(v3)--(b13)--(a13)--(v1);
    \draw[blue!70,opacity=.22,line width=9pt,rounded corners]
      (v2)--(a24)--(b24)--(v4);

    \foreach \u/\a/\b/\v in {v1/a12/b12/v2,v1/a14/b14/v4,v2/a23/b23/v3,v3/a34/b34/v4}{
      \draw[nedge] (\u)--(\a);
      \draw[medge] (\a)--(\b);
      \draw[nedge] (\b)--(\v);
    }
    \foreach \u/\a/\b/\v in {v1/a13/b13/v3,v2/a24/b24/v4}{
      \draw[medge] (\u)--(\a);
      \draw[nedge] (\a)--(\b);
      \draw[medge] (\b)--(\v);
    }

    \node[branch] at (v1) {$1$};
    \node[branch] at (v2) {$2$};
    \node[branch] at (v3) {$3$};
    \node[branch] at (v4) {$4$};
    \foreach \p in {a12,b12,a13,b13,a14,b14,a23,b23,a24,b24,a34,b34}
      \node[sub] at (\p) {};

    \node[font=\scriptsize,fill=white,inner sep=1pt] at ($(v1)!0.50!(v2)+(-.16,.15)$) {$L_{12}$};
    \node[font=\scriptsize,fill=white,inner sep=1pt] at ($(v1)!0.51!(v3)+(.16,.18)$) {$L_{13}$};
    \node[font=\scriptsize,fill=white,inner sep=1pt] at ($(v1)!0.52!(v4)+(.22,0)$) {$L_{14}$};
    \node[font=\scriptsize,fill=white,inner sep=1pt] at ($(v2)!0.50!(v3)+(0,.18)$) {$L_{23}$};
    \node[font=\scriptsize,fill=white,inner sep=1pt] at ($(v2)!0.50!(v4)+(-.18,-.10)$) {$L_{24}$};
    \node[font=\scriptsize,fill=white,inner sep=1pt] at ($(v3)!0.50!(v4)+(.18,-.10)$) {$L_{34}$};
  \end{scope}

  \node[font=\scriptsize,text=cyan!55!black] at (-2.55,2.80) {$B_1$};
  \node[font=\scriptsize,text=green!45!black] at (2.55,-2.35) {$B_2$};
  \node[font=\scriptsize,text=blue!70!black,align=center] at (-2.50,-2.45) {opposite $mm$\\joining path};
\end{tikzpicture}
\caption{A $K_4$-type T-posy.  The cyan and green blossoms share one $mm$ link, while the blue opposite $mm$ link joins their bases; the other four links are of type $nn$.}

\end{subfigure}\hfill
\begin{subfigure}{0.35\textwidth}
\centering
\begin{tikzpicture}[scale=.80,thick,
    branch/.style={draw,circle,fill=white,inner sep=1pt},
    markx/.style={circle,fill=blue!65,inner sep=1.7pt},
    marky/.style={circle,fill=green!55!black,inner sep=1.7pt}]
  \foreach \yy/\lab in {3.05/{same},0/{adjacent},-3.05/{opposite}}{
    \coordinate (p1) at (0,\yy+1.00);
    \coordinate (p2) at (-1.10,\yy);
    \coordinate (p3) at (1.10,\yy);
    \coordinate (p4) at (0,\yy-1.00);
    \draw (p1)--(p2)--(p4)--(p3)--(p1) (p2)--(p3) (p1)--(p4);
    \foreach \p in {p1,p2,p3,p4}{\node[branch] at (\p) {};}
    \node[font=\scriptsize] at (1.83,\yy) {\lab};
    \ifdim\yy pt>2pt
      \draw[blue!65,line width=2.3pt] (p1)--(p2);
      \node[markx,label={[font=\tiny]145:$x$}] at ($(p1)!.34!(p2)$) {};
      \node[marky,label={[font=\tiny]220:$y$}] at ($(p1)!.69!(p2)$) {};
    \else\ifdim\yy pt>-2pt
      \draw[blue!65,line width=2.3pt] (p1)--(p2);
      \draw[green!55!black,line width=2.3pt] (p1)--(p3);
      \node[markx,label={[font=\tiny]145:$x$}] at ($(p1)!.56!(p2)$) {};
      \node[marky,label={[font=\tiny]35:$y$}] at ($(p1)!.56!(p3)$) {};
    \else
      \draw[blue!65,line width=2.3pt] (p1)--(p2);
      \draw[green!55!black,line width=2.3pt] (p3)--(p4);
      \node[markx,label={[font=\tiny]145:$x$}] at ($(p1)!.56!(p2)$) {};
      \node[marky,label={[font=\tiny]-35:$y$}] at ($(p3)!.56!(p4)$) {};
    \fi\fi
  }
\end{tikzpicture}
\caption{The three link-pair orbits for the endpoints $x,y$ of the odd ear: same, adjacent, and opposite.}

\end{subfigure}
\caption{The $K_4$ geometry used in the T-posy and Odd-Ear arguments.  Matching edges are drawn as red wavy edges, as in \cref{fig_allposies}; panel (b) records the three symmetry classes that exhaust the $K_4$ case of Odd-Ear Absorption.}
\label{fig_k4_tposy}
\end{figure}

The next lemma contains the only finite structural analysis required in the proof.  It is stated
with its matching-extension clause because that clause is essential when the
T-posy lies inside a larger graph.

\begin{lemma}[Odd-Ear Absorption Lemma]
\label{lem_ear_absorption}
Let \(Q\) be an \(M\)-\(T\)-posy in a graph \(G\), and let
\(R=xRy\) be a simple odd \(M\)-alternating ear relative to \(Q\),
whose internal vertices lie outside \(Q\), and whose first and last
edges have status \(n\). We allow \(x=y\), in which case \(R\) is a
simple closed ear.

Set
\[
X:=V(Q)\cup V(R).
\]
Then there exists a perfect matching \(\widehat M\) of \(G\), agreeing
with \(M\) outside \(G[X]\), and an \(\widehat M\)-\(T\)-posy
\(Q'\subseteq G[X]\) such that
\[
\operatorname{int}R\subseteq V(Q').
\]
\end{lemma}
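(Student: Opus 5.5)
The plan is to work entirely inside \(G[X]\) and to change \(M\) only by symmetric differences with even \(M\)-alternating cycles contained in \(Q\cup R\). Any such \(\widehat M\) is automatically a perfect matching of \(G\) agreeing with \(M\) outside \(G[X]\). I use \(M\) as a perfect matching of \(G\), which the phrase ``perfect matching \(\widehat M\) of \(G\)'' presupposes.

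First I record the local parity facts. Since \(R\) is odd and both end edges have status \(n\), its interior is perfectly matched by the \(m\)-edges of \(R\). By the last sentence of the definition of a T-posy, \(M\) is perfect on \(V(Q)\). Hence \(M\cap E(Q\cup R)\) is perfect on \(X\), and no \(M\)-edge leaves \(X\) (compare \cref{lem_trace_perfect}). The basic gluing rule is then a parity count. If \(L\) is an \(x\)-\(y\) path in \(Q\), then \(R\cup L\) is:
\begin{itemize}
\item an even \(M\)-alternating cycle when \(L\) is \(M\)-alternating of type \(mm\);
\item a new blossom with base \(x\) (respectively \(y\)) when \(L\) has type \(mn\) (respectively \(nm\)).
\end{itemize}

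The proof is then a finite case analysis on the positions of \(x,y\) in the rigid geometry of \(Q\).

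In the barbell form, write \(Q=C_1\cup P\cup C_2\), with bases \(b_1,b_2\). The cases are: \(x,y\) on the same cycle; on different cycles; one or both on \(P\); and \(x=y\).

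In the \(K_4\) form I use the branch vertices \(1,2,3,4\) and the links \(L_{ij}\). By the symmetry recorded in \cref{fig_k4_tposy}(b), the pair of links carrying \(x\) and \(y\) falls into one of three orbits: same, adjacent, or opposite. One must also track whether each endpoint is a branch vertex, and which side of its matching edge it lies on.

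In each case I aim for one of two outcomes.
\begin{itemize}
\item[(a)] \(R\) together with a suitable arc of \(Q\) forms a new blossom \(C'\), with base \(x\), \(y\), or an old base. One old blossom is then replaced by \(C'\). The new joining path is taken as the appropriate \(mm\)-subpath of \(Q\), truncated at its first contact with \(C'\). This truncation restores either the vertex-disjoint barbell form or the single-shared-\(mm\)-link \(K_4\) form.
\item[(b)] There is an \(mm\) path \(L\subseteq Q\) from \(x\) to \(y\). We then flip along the even cycle \(R\cup L\). This turns arcs of \(Q\) of type \(nn\) into type \(mm\) and conversely, and re-reading the same edge set \(Q\cup R\), or a sub-theta of it, under \(\widehat M\) exhibits a T-posy.
\end{itemize}

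The closed-ear case \(x=y\) is handled first. Here \(R\) is itself a blossom with base \(x\). Since \(x\) is saturated in \(Q\), one follows \(M(x)\) into \(Q\) along the unique \(m\)-start alternating route toward a base. Where this route first hits an old blossom, it either reaches that blossom's base with type \(mm\), giving a barbell, or it hits the blossom at an interior point. In the latter case one switches along the appropriate blossom arc to move the base, which produces a \(K_4\)-type configuration.

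To organize the open cases I use \cref{thm_jposy_trace_strict_hall}. Since \(Q\) is the trace of a posy, \(D_{M}(Q)\) is strongly connected, so suitable alternating routes from \(M(x)\) and from \(M(y)\) always exist inside \(Q\). Among such routes I always choose one that is shortest, or that has the fewest vertices off a fixed blossom. This minimality is what forces simplicity and limits intersections to the two standard T-forms, rather than only the S- or J-forms.

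The main obstacle I expect is the \(K_4\) case with \(x,y\) on adjacent or opposite links. There the natural new configuration after gluing \(R\) is a theta-like graph with three odd paths between two vertices, or a \(K_4\)-subdivision with one more ear. One must check that a choice of which two odd cycles to call blossoms, possibly after one alternating-cycle flip, yields exactly the rigid parity pattern: a shared \(mm\) link, four \(nn\) links, and an opposite \(mm\) link.

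I would do this by listing the parities (\(mm\), \(nn\), or \(mn\)) of the two subpaths of the link containing \(x\), and likewise for \(y\). In each sub-case I would either find a barbell sub-configuration through \(\operatorname{int}R\) directly, or flip along the cycle formed by \(R\) and the \(mm\) route through the shared link. The point to verify in that second step is that the flip exchanges the roles of the shared link and a side link, which restores the \(K_4\) pattern.
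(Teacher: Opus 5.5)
\textbf{This is a plan rather than a proof, and the parts it does spell out need repair.} Your framework is sound as far as it goes. Since no $M$-edge leaves $X$, every perfect matching of $Q\cup R$ extends to $G$. It also differs from $M\cap E(Q\cup R)$ by alternating cycles of $Q\cup R$. But the lemma's whole content is the finite case analysis. You list the cases and two intended outcomes without checking any of them, and you explicitly defer the $K_4$ adjacent and opposite cases. Four specific problems:

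(i) Your gluing rule puts the base at the wrong end. If $L$ runs from $x$ to $y$ with type $mn$, then $x$ is covered by the first edge of $L$. The vertex $y$ carries two non-matching edges, so the base is $y$.

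(ii) Outcomes (a) and (b) are not exhaustive. Take the barbell with $x,y\in V(P)$ and $S=xPy$ odd of type $nn$. Then $S$ is the only $x$--$y$ path in $Q$. So $R$ lies on no odd cycle of $Q\cup R$, and there is no $mm$ route to flip along. The witness is simply $C_1\cup b_1PxRyPb_2\cup C_2$ with $M$ unchanged.

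(iii) Choosing shortest routes in $D_M(Q)$ does not force the T-form. Lifts of shortest directed paths need not even be simple; this is exactly why the Rigid Posy Theorem has to extract a blossom from a repetition. Minimality also gives no control over how a new blossom meets the old ones.

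(iv) The closed-ear case needs neither base-moving nor a $K_4$. The $m$-start route is also not unique at branch vertices. In both geometries every $x\in V(Q)$ is joined by a simple $mm$ path $A$ to the base $c$ of a blossom $C\subseteq Q$ with $V(A)\cap V(C)=\{c\}$. Hence $C\cup A\cup R$ is already an $M$-T-posy of barbell type.

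\textbf{The missing idea is to stop tracking $M$ on $X$.} This is what makes the paper's analysis short. Because $M$ is closed on $X$, it suffices to find an even subdivision $Q'\subseteq Q\cup R$ of the barbell or of $K_4$ that contains $\operatorname{int}R$ and for which $X\setminus V(Q')$ splits into subpaths of even order. One then gives $Q'$ its canonical perfect matching and matches the residual paths consecutively. Every case becomes a parity count of segment lengths. The full symmetry of $K_4$, with its three orbits (same, adjacent, opposite), is then legitimately available.

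In your $M$-tracking version that symmetry is lost, because $M$ singles out the opposite pair of $mm$-links. Each orbit therefore splits further:
\begin{enumerate}
\item[(1)] one link: $mm$ or $nn$;
\item[(2)] adjacent links: $(mm,nn)$ or $(nn,nn)$;
\item[(3)] opposite links: $(mm,mm)$ or $(nn,nn)$.
\end{enumerate}
Each of these has its own parity subcases. In each you must also choose a witness whose matching is reachable from $M$ by your allowed flips. Your flips can in principle reach every matching the paper uses, but then they only re-encode the re-matching. Until that enumeration is carried out, or replaced by the re-matching reduction, the lemma is not established.
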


\begin{proof}
We first isolate the matching surgery used throughout the proof.
Since \(Q\) is an \(M\)-\(T\)-posy, every vertex of \(Q\) is
\(M\)-saturated inside \(Q\). Since \(R\) is an odd alternating ear of
type \(nn\), all internal vertices of \(R\) are paired by \(M\)-edges
of \(R\). Hence every vertex of \(X\) has its \(M\)-mate in \(X\), and
therefore no edge of \(M\) has exactly one endpoint in \(X\).

Consequently, whenever we construct a perfect matching
\(\widehat M_X\) of \(G[X]\), it extends to a perfect matching of \(G\)
by
\[
\widehat M
=
\widehat M_X
\cup
\bigl(M\setminus E(G[X])\bigr).
\]
Thus, in each case below, it is enough to exhibit a \(T\)-posy
\(Q'\subseteq G[X]\) containing \(\operatorname{int}R\), together with
a perfect matching of the residual vertices \(X\setminus V(Q')\).
We give \(Q'\) its canonical perfect matching and match the residual
pieces consecutively along the paths described below.

Suppose first that \(x=y\). Then \(R\) is an \(M\)-blossom with base
\(x\). Since \(Q\) is a \(T\)-posy, a direct inspection of its two
possible geometries yields an \(M\)-blossom \(C\subseteq Q\), with base
\(c\), and a nontrivial simple \(M\)-alternating path \(A\) of type
\(mm\) from \(c\) to \(x\), such that
\[
V(A)\cap V(C)=\{c\}.
\]
Indeed, in the barbell case this follows from the parity of the two
routes from \(x\) to the two bases. In the \(K_4\)-subdivision case,
after relabeling so that two opposite links have type \(mm\), the other
four have type \(nn\), and there is an \(M\)-blossom based at each
branch vertex. The four branch-vertex positions, the two \(mm\)-link
positions, and the four \(nn\)-link positions are all covered by the
same parity routing: for an \(mm\)-link use the odd terminal segment;
for an \(nn\)-link prepend the appropriate adjacent \(mm\)-link to the
even terminal segment. In every case the chosen path meets its blossom
only at the base.

Since \(V(R)\cap V(Q)=\{x\}\), the blossoms \(C\) and \(R\) are
vertex-disjoint and \(A\) meets them only at their bases. Hence
\[
Q':=C\cup A\cup R
\]
is an \(M\)-\(T\)-posy in the barbell case, and no change of matching
is required. Hence assume from now on that \(x\neq y\).

Suppose first that \(Q\) is in the barbell case. Write
\[
Q=C_1\cup P\cup C_2,
\]
where \(C_1,C_2\) are the two blossoms, with bases \(b_1,b_2\), and
\(P=b_1Pb_2\) is the joining path. Up to interchanging the two blossoms
and the two ends of \(R\), there are four possible positions of
\(x\) and \(y\).

If \(x,y\in V(P)\), let \(S=xPy\). If \(\|S\|\) is odd, replace \(S\)
by \(R\). The resulting \(b_1\)-\(b_2\) path remains odd and yields a
new barbell \(T\)-posy. The residual vertices are precisely
\(\operatorname{int}S\), which have even cardinality. If \(\|S\|\) is
even, then
\[
D:=R\cup S
\]
is an odd cycle. Of the two tails \(b_1Px\) and \(yPb_2\), exactly one
has odd length. Joining \(D\) to the blossom at the end of that odd
tail yields a barbell \(T\)-posy. The remaining tail has even length;
together with the unused original blossom it leaves only path pieces
of even order, and these admit consecutive perfect matchings.

If \(x\in V(C_1)\) and \(y\in V(P)\), write
\[
a:=\|b_1Py\|,
\qquad
d:=\|yPb_2\|.
\]
The numbers \(a\) and \(d\) have opposite parity. If \(d\) is even,
then
\[
R\cup yPb_2
\]
is an odd path from \(x\) to \(b_2\), and
\[
C_1\cup R\cup yPb_2\cup C_2
\]
is a barbell with bases \(x\) and \(b_2\). The residual is
\(\operatorname{int}(b_1Py)\), which has even order. If \(d\) is odd,
then \(a\) is even. Let \(E\) be the even \(b_1\)-\(x\) arc of \(C_1\)
and put
\[
D:=R\cup yPb_1\cup E.
\]
Then \(D\) is an odd cycle and
\[
D\cup yPb_2\cup C_2
\]
is a barbell. The complementary \(b_1\)-\(x\) arc of \(C_1\) has odd
length, so its internal vertices have even cardinality. The case
\(x=b_1\) is included by taking \(E\) to be the trivial path.

If \(x,y\in V(C_1)\), let \(E\) and \(F\) be respectively the even and
odd \(x\)-\(y\) arcs of \(C_1\). Then
\[
D:=R\cup E
\]
is an odd cycle. If \(b_1\in V(E)\), then
\[
D\cup P\cup C_2
\]
is a barbell, and the residual is \(\operatorname{int}F\), which has
even order. If \(b_1\in V(F)\), write
\[
F=xFb_1Fy.
\]
Exactly one of the two segments \(xFb_1\) and \(b_1Fy\) has even
length. Choosing the corresponding endpoint \(x\) or \(y\) as the base
of \(D\), and joining it to \(C_2\) through that even segment followed
by \(P\), produces an odd joining path. The unused complementary
segment has odd length and hence an even number of internal vertices.

Finally, if \(x\in V(C_1)\) and \(y\in V(C_2)\), then
\[
Q':=C_1\cup R\cup C_2
\]
is itself a barbell with bases \(x\) and \(y\). Give this barbell its
canonical perfect matching. The only residual vertices are
\(\operatorname{int}P\), and they have even cardinality because
\(\|P\|\) is odd.

This exhausts the barbell case.

Now suppose that \(Q\) is an even subdivision of \(K_4\). Label its
branch vertices \(1,2,3,4\) and its links \(L_{ij}\). All six links
have odd length. Up to an automorphism of \(K_4\), the ends \(x,y\)
lie on the same link, on adjacent links, or on opposite links. Branch
vertices are included in these cases; trivial terminal subpaths of
length \(0\) are allowed.

Assume first that \(x,y\in L_{12}\), in this order, and let
\[
S:=xL_{12}y.
\]
If \(\|S\|\) is odd, replace \(S\) by \(R\). The resulting six links
are again all odd, so the resulting graph is an even subdivision of
\(K_4\). The residual is \(\operatorname{int}S\), which has even
cardinality. If \(\|S\|\) is even, then
\[
D:=R\cup S
\]
is an odd cycle. The two tails \(1L_{12}x\) and \(yL_{12}2\) have
opposite parity. The odd tail joins \(D\) to the appropriate odd cycle
through the remaining four branch vertices, producing a barbell
\(T\)-posy. The complementary tail has even length, and together with
the unused link interiors yields only path components of even order.

Next suppose that \(x\in L_{12}\) and \(y\in L_{13}\). Put
\[
\alpha:=\|1L_{12}x\|\pmod2,
\qquad
\beta:=\|1L_{13}y\|\pmod2.
\]
There are four parity pairs \((\alpha,\beta)\). In the case
\((0,0)\), the odd cycle
\[
R\cup xL_{12}1L_{13}y
\]
is joined through \(L_{14}\) to
\[
L_{23}\cup L_{34}\cup L_{42},
\]
and the residual consists of the interiors of \(xL_{12}2\) and
\(yL_{13}3\). In the other three parity cases the witness is an even
subdivision of \(K_4\): the two relevant odd cycles share one odd link,
the four remaining cycle arms are odd, and the joining path is odd.
For \((1,1)\), \((1,0)\), and \((0,1)\), respectively, the residual is
the interior of \(L_{23}\), \(1L_{12}x\), and \(1L_{13}y\). Thus the
four parity cases are all covered, and every residual component has even
order.

Finally suppose that \(x\in L_{12}\) and \(y\in L_{34}\). Let
\(i,i'\in\{1,2\}\) be chosen so that
\[
\|xL_{12}i\|\equiv1,
\qquad
\|xL_{12}i'\|\equiv0
\pmod2,
\]
and let \(j,j'\in\{3,4\}\) be chosen so that
\[
\|yL_{34}j\|\equiv0,
\qquad
\|yL_{34}j'\|\equiv1
\pmod2.
\]
Set
\[
C=
R\cup xL_{12}i\cup L_{ij}\cup jL_{34}y,
\]
\[
D=
L_{12}\cup L_{1j'}\cup L_{2j'},
\]
and
\[
A=yL_{34}j'.
\]
The cycles \(C\) and \(D\) are odd, they share the odd path
\(xL_{12}i\), and \(A\) is odd. More explicitly, the six effective
links have branch vertices \(x,i,y,j'\) and are
\[
xL_{12}i,
\qquad
yL_{34}j',
\qquad
R,
\qquad
L_{ij}\cup jL_{34}y,
\qquad
L_{ij'},
\qquad
xL_{12}i'\cup L_{i'j'}.
\]
Each has odd length. Hence their union is an even subdivision of
\(K_4\), and therefore a \(T\)-posy containing all internal vertices
of \(R\). The only unused original link is \(L_{i'j}\); its internal
vertices have even cardinality and admit a consecutive perfect
matching.

The same constructions remain valid when an end of \(R\) is a base or
a branch vertex; the corresponding terminal subpath is simply trivial.
Thus all possible positions of the ends of \(R\) are covered.

In every case we have obtained a \(T\)-posy \(Q'\subseteq G[X]\)
containing \(\operatorname{int}R\) and a perfect matching of the
residual vertices. Combining the canonical perfect matching of \(Q'\)
with the residual matching gives a perfect matching
\(\widehat M_X\) of \(G[X]\), and the initial matching surgery extends
it to a perfect matching \(\widehat M\) of \(G\) agreeing with \(M\)
outside \(G[X]\).
\end{proof}

\begin{lemma}[Directed-Path Lifting Lemma]
\label{lem_directed_path_lifting}
Let \(G\) have a perfect matching \(M\), let \(Q\) be an
\(M\)-saturated subgraph of \(G\), and let
\(xy\in E(G)\setminus M\) with \(x\in V(Q)\) and \(y\notin V(Q)\).
Put \(z_0=M(y)\), and let
\[
z_0,z_1,\ldots,z_k=q
\]
be a shortest directed path in \(D_M(G)\) from \(z_0\) to \(V(Q)\),
chosen so that \(q\) is its first vertex in \(Q\). Then \(k\geq1\), and
\[
W=x,y,z_0,M(z_1),z_1,\ldots,z_{k-1},M(q)
\]
is an odd \(M\)-alternating walk of type \(nn\). Moreover,
\[
y\in\operatorname{int}W
\qquad\text{and}\qquad
V(W)\cap V(Q)=\{x,M(q)\},
\]
where the two vertices in the last set may coincide.
\end{lemma}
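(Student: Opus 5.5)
The plan is a direct verification from the definition of \(D_M(G)\). I read ``\(M\)-saturated subgraph'' in the sense established for posy traces in \cref{lem_trace_perfect}: every vertex of \(Q\) has its \(M\)-mate in \(Q\). Equivalently, \(V(Q)\) is closed under the involution \(M\). The whole argument rests on this closure property together with the minimality of the chosen directed path. There is no real obstacle; the only points needing care are bookkeeping ones, namely the edge statuses and the claim that no intermediate vertex of \(W\) lies in \(Q\).

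First, \(k\ge1\). Since \(y\notin V(Q)\) and \(V(Q)\) is closed under \(M\), the vertex \(z_0=M(y)\) is not in \(V(Q)\). So the path must take at least one step to reach \(V(Q)\). Also \(z_0\neq y\), because \(M\) is perfect.

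Next, check the statuses edge by edge:
\begin{itemize}
\item \(xy\in E(G)\setminus M\) by hypothesis, so it has status \(n\).
\item \(yz_0\in M\), so it has status \(m\).
\item For each arc \(z_{i-1}\to z_i\) with \(1\le i\le k\), the definition of \(D_M(G)\) gives \(z_{i-1}M(z_i)\in E(G)\setminus M\), which has status \(n\). Each such edge is followed, for \(i<k\), by the matching edge \(M(z_i)z_i\), which has status \(m\).
\end{itemize}
The walk therefore reads \(n,m,(n,m)^{k-1},n\). It alternates, starts and ends with status \(n\), and has \(2+2(k-1)+1=2k+1\) edges. Hence \(W\) is odd of type \(nn\). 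The vertex \(y\) is the second vertex of a walk of length at least \(3\), so \(y\in\operatorname{int}W\).

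Finally, the intersection with \(Q\).
\begin{itemize}
\item \(x\in V(Q)\) by hypothesis.
\item \(M(q)\in V(Q)\), because \(q\in V(Q)\) and \(V(Q)\) is closed under \(M\).
\item Since \(q=z_k\) is the first vertex of the path in \(V(Q)\), the vertices \(z_0,\dots,z_{k-1}\) lie outside \(V(Q)\).
\item Closure under \(M\) then puts \(y=M(z_0)\) and \(M(z_1),\dots,M(z_{k-1})\) outside \(V(Q)\) as well.
\end{itemize}
These are all the vertices of \(W\), so \(V(W)\cap V(Q)=\{x,M(q)\}\), and the two may coincide. Shortestness is used only to guarantee that the \(z_i\) are distinct. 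Walk repetitions among the \(M(z_i)\) are harmless because the statement only asserts a walk.
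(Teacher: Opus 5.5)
Your proof is correct and follows essentially the same route as the paper: you read \(M\)-saturated as closure of \(V(Q)\) under \(M\), read off the status sequence \(n,m,n,m,\ldots,m,n\) from the arcs of \(D_M(G)\), and combine the first-hit choice of \(q\) with that closure to keep \(y\) and the lifted vertices \(M(z_i)\) out of \(Q\). You also spell out why \(z_0\notin V(Q)\), a step the paper's proof only asserts.
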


\begin{proof}
Since \(z_0\notin V(Q)\), the directed path has positive length, so
\(k\geq1\). For each \(0\leq i<k\), the arc \(z_i\to z_{i+1}\) means
\[
z_iM(z_{i+1})\in E(G)\setminus M.
\]
Together with \(xy\notin M\), \(yz_0=yM(y)\in M\), and the matching
edges \(M(z_i)z_i\), this gives the status sequence
\[
n,m,n,m,\ldots,m,n
\]
along \(W\). Hence \(W\) has type \(nn\), and
\(\|W\|=2k+1\) is odd. Since \(k\geq1\), the vertex \(y\) is internal.

By the choice of the directed path,
\[
z_0,z_1,\ldots,z_{k-1}\notin V(Q),
\qquad q\in V(Q).
\]
Because \(Q\) is \(M\)-saturated, \(M(q)\in V(Q)\). If
\(M(z_i)\in V(Q)\) for some \(1\leq i<k\), then the unique
\(M\)-mate of \(M(z_i)\) also lies in \(Q\), forcing
\(z_i\in V(Q)\), a contradiction. Thus no intermediate vertex of the
lift lies in \(Q\), and the asserted intersection follows.
\end{proof}

\begin{theorem}[Rigid Posy Theorem]\label{thm_marked_strict_hall}
Let \(H\) be a connected graph with a perfect matching and suppose that
\(H\) satisfies strict Hall. Then every vertex \(v\in V(H)\) lies in an
\(M_v\)-T-posy for some perfect matching \(M_v\) of \(H\).
\end{theorem}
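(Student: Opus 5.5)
The plan is a growth argument. We start from some \(T\)-posy in \(H\) and enlarge it by repeated use of the Odd-Ear Absorption Lemma (\cref{lem_ear_absorption}), changing the perfect matching each time, until the prescribed vertex \(v\) is covered.

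\emph{Existence of an initial \(T\)-posy.} Fix a perfect matching \(M\) of \(H\). By \cref{lem_strict_hall_strong}, \(D_M(H)\) is strongly connected. We may assume \(|V(H)|\ge 2\); strict Hall then forces \(H\) to be non-bipartite, since in a bipartite graph with a perfect matching each colour class \(S\) has \(|N(S)|=|S|\). First we find an odd cycle that is an \(\widehat M\)-blossom for a suitable perfect matching \(\widehat M\). Take any \(u\) and a directed cycle through \(u\) in \(D_M(H)\). Strong connectivity gives directed walks \(u\leadsto M(u)\), because \(M(u)\) is reachable from \(u\). Lifting such a walk gives an alternating walk from \(u\) starting with an \(n\)-edge and ending with the matching edge into \(u\). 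Shortcutting a shortest such directed path gives an odd alternating closed structure, namely a blossom with base \(u\), in the sense of \cref{lem_blossom_projection} read backwards. This yields an \(M\)-blossom \(C\) with base \(u\). Since \(C\) saturates everything except \(u\), an edge of \(M\) leaves \(C\) at \(u\).

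Next we use \cref{lem_directed_path_lifting} with \(Q=C\cup\{uM(u)\}\) (which is \(M\)-saturated) to grow odd ears. The cleanest route is to take a second blossom through the opposite endpoint \(M(u)\), obtained the same way with the roles reversed via the involution. A shortest joining alternating \(mm\) path then gives an \(S\)-posy. A minimality argument (shortest configuration) will untangle this into a \(T\)-posy: an intersection either yields a shorter configuration or the \(K_4\)-type geometry. If this untangling is messy, the fallback is to produce a \(J\)-posy first and then feed it into the absorption loop below, starting from a single blossom plus its matched edge.

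\emph{Growth step.} Among all pairs \((M',Q)\), with \(M'\) a perfect matching and \(Q\) an \(M'\)-\(T\)-posy, choose one with \(|V(Q)|\) maximal. Suppose \(v\notin V(Q)\). Because \(Q\) is \(M'\)-saturated (the inherited matching is perfect on \(Q\), so no \(M'\)-edge leaves \(Q\)) and \(H\) is connected, there is an edge \(xy\) with \(x\in V(Q)\) and \(y\notin V(Q)\). This edge is necessarily not in \(M'\). By \cref{lem_strict_hall_strong}, \(D_{M'}(H)\) is strongly connected, so the shortest path required in \cref{lem_directed_path_lifting} from \(M'(y)\) to \(V(Q)\) exists. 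The lemma yields an odd alternating \(nn\) walk \(W\) from \(x\) to \(M'(q)\) whose interior avoids \(Q\) and contains \(y\). Shortestness makes \(W\) simple: the vertices \(z_i\) are distinct, and so are their mates. Hence \(W\) is a simple odd ear, possibly closed when \(x=M'(q)\). Then \cref{lem_ear_absorption} gives \(\widehat M\) and an \(\widehat M\)-\(T\)-posy containing \(\operatorname{int}W\cup\) \ldots. At minimum it contains \(y\notin V(Q)\).

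\emph{The main obstacle.} Absorption only guarantees \(\operatorname{int}R\subseteq V(Q')\), not \(V(Q)\subseteq V(Q')\), so \(|V(Q')|\) need not exceed \(|V(Q)|\). I would therefore run the induction with a different potential: process the vertices along a fixed path from \(V(Q)\) to \(v\). Better, choose \(y\) on a shortest \(H\)-path from \(Q\) to \(v\), and use as potential the distance \(\operatorname{dist}_H(V(Q),v)\). Since \(y\in V(Q')\), this distance strictly decreases. That makes \(v\) eventually covered, and it circumvents the lost vertices. Checking that each new \(Q'\) remains a \(T\)-posy for a perfect matching of all of \(H\) is exactly the extension clause of \cref{lem_ear_absorption}, so the induction closes.
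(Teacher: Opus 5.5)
Your architecture matches the paper's: start from an initial T-posy, then cross an edge \(xy\) from a covered vertex to an uncovered one using \cref{lem_directed_path_lifting} and \cref{lem_ear_absorption}. Your distance potential is a valid substitute for the paper's closure argument on the set \(Z\) of coverable vertices.

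\textbf{The growth step rests on a false claim.} Shortestness does \emph{not} make \(W\) simple. The even-position vertices \(x,z_0,\dots,z_{k-1}\) are pairwise distinct, and so are the odd-position vertices \(M(z_0),\dots,M(q)\). But a shortest path may contain both \(z_j\) and \(z_i=M(z_j)\).

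For example, attach to a vertex \(x\) of a T-posy \(Q\) new vertices \(y,z_0,a,c\) with edges \(xy,yz_0,z_0a,z_0c,ac\), and add \(yz_0,ac\) to \(M\). The resulting graph is connected and matchable, and its alternating digraph is strongly connected, so it is strict Hall. Yet every shortest path from \(z_0\) to \(V(Q)\) is \(z_0\to c\to y\to M(x)\) (or via \(a\)). This lifts to \(W=x,y,z_0,a,c,z_0,y,x\), and Odd-Ear Absorption does not apply to it.

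The paper handles this case separately. The first vertex to recur along \(W\) first occurs at an even position and recurs at an odd one. So the segment between the two occurrences is an \(M\)-blossom \(B\) with base \(b\). The prefix \(P\) from \(x\) to \(b\) is trivial or a simple \(nm\)-path, \(y\in V(P\cup B)\), and \(V(P\cup B)\cap V(Q)=\{x\}\). In either T-posy geometry there is a blossom \(C\subseteq Q\) with base \(c\), and an \(mm\)-path \(A\subseteq Q\) from \(c\) to \(x\) meeting \(C\) only at \(c\). Then \(C\cup A\cup P\cup B\) is a barbell \(M\)-T-posy containing \(y\), with no change of matching.

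\textbf{The base case is not established.} An S-posy is indeed cheap: blossoms based at \(u\) and at \(M(u)\), joined by the edge \(uM(u)\). But the claim that a minimal configuration ``untangles'' into a T-posy is precisely the hard part. The two blossoms can overlap in several segments, and nothing in your sketch controls this. The fallback does not work either: \cref{lem_ear_absorption} requires a T-posy as input, and no analogue is available for a blossom plus its matching edge.

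The paper obtains the initial T-posy from outside. Take a maximum independent set \(S\). Strict Hall gives \(|V(H)|-|S|=|N_H(S)|>|S|\), so \(\alpha(H)<\mu(H)\) and \(H\) is not K\H{o}nig--Egerv\'{a}ry. Lov\'asz's characterization of factorizable KE graphs then supplies a nice subgraph that is an even subdivision of the barbell or of \(K_4\). Its canonical matching, together with a perfect matching of the rest of \(H\), gives the required \(M\) and \(M\)-T-posy.
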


\begin{proof}
Write \(|V(H)|=2h\), and let \(S\) be a maximum independent set. Since
\(S\) is maximal,
\[
N_H(S)=V(H)\setminus S.
\]
Strict Hall gives \(2h-|S|>|S|\), and hence
\[
\alpha(H)<h=\mu(H).
\]
Thus \(H\) is not K\H{o}nig--Egerv\'{a}ry. By the characterization of
factorizable K\H{o}nig--Egerv\'{a}ry graphs by forbidden nice subgraphs
\cite{lovasz1983ear,lovasz2009matching}, \(H\) contains as a nice subgraph
an even subdivision \(Q\) of either the barbell graph or \(K_4\). Give
\(Q\) its canonical perfect matching and combine it with a perfect matching
of \(H-V(Q)\). This yields a perfect matching \(M\) of \(H\) for which
\(Q\) is an \(M\)-T-posy. In particular, at least one vertex of \(H\) lies
in a T-posy for some perfect matching.

Let \(Z\) be the set of vertices of \(H\) that lie in an
\(M'\)-T-posy for some perfect matching \(M'\) of \(H\).
Suppose that \(Z\neq V(H)\). Since \(H\) is connected, there is an edge
\(xy\) with \(x\in Z\) and \(y\notin Z\). Choose a perfect matching
\(M\) of \(H\) and an \(M\)-T-posy \(Q\) containing \(x\). Since every
vertex of \(Q\) is \(M\)-saturated inside \(Q\), we have \(xy\notin M\);
otherwise \(y=M(x)\in V(Q)\subseteq Z\). Put \(z_0=M(y)\). Again
\(z_0\notin V(Q)\), for otherwise \(y=M(z_0)\in V(Q)\).

By \cref{lem_strict_hall_strong}, \(D_M(H)\) is strongly connected. Choose
a shortest directed path from \(z_0\) to \(V(Q)\), and apply
\cref{lem_directed_path_lifting}. We obtain an odd \(M\)-alternating walk
\(W\) of type \(nn\) such that
\[
y\in\operatorname{int}W,
\qquad
V(W)\cap V(Q)=\{x,M(q)\}.
\]

If \(W\) has no repeated vertex, except possibly that its two endpoints
coincide, then \(W\) is a simple odd ear relative to \(Q\). By
\cref{lem_ear_absorption}, there is a T-posy containing \(y\) for some
perfect matching of \(H\), contradicting \(y\notin Z\).

Suppose therefore that \(W\) repeats a vertex. In the displayed lift, the
even-position vertices
\[
x,z_0,\ldots,z_{k-1}
\]
are distinct, and the odd-position vertices
\[
y=M(z_0),M(z_1),\ldots,M(q)
\]
are distinct. Hence every repetition identifies an even-position vertex
with an odd-position vertex. Consider the first second occurrence of a
vertex along \(W\). It cannot occur in an even position. Indeed, if
\(z_i=M(z_j)\) with \(j<i\), then \(M(z_i)=z_j\) already gives a second
occurrence one position earlier; the case \(i=j\) is impossible because
\(M\) is perfect in a simple graph. Therefore the first repeated segment
starts in an even position and ends in an odd position. It has odd length,
its first and last edges have status \(n\), and it has no internal
repetition. Thus it is an \(M\)-blossom \(B\), with base \(b\).

Let \(P\) be the prefix of \(W\) from \(x\) to the first occurrence of
\(b\). Then \(P\) is either trivial or a simple \(M\)-alternating path of
type \(nm\), and
\[
V(P)\cap V(B)=\{b\}.
\]
Moreover, \(y\in V(P\cup B)\). We claim that
\[
V(P\cup B)\cap V(Q)=\{x\}.
\]
The only possible vertex of \(P\cup B\) in \(Q\), apart from \(x\), is
the final endpoint \(M(q)\) of \(W\). If \(B\) reaches \(M(q)\), then
this final occurrence repeats an even-position vertex lying in \(Q\). The
only such vertex is \(x\), so \(b=M(q)=x\). This proves the claim.

It remains to connect the new blossom \(B\) to a blossom already contained
in \(Q\). We use only the local geometry of the T-posy \(Q\). In the
barbell case, parity along the joining path and the two odd cycles yields an
\(M\)-blossom \(C\subseteq Q\), with base \(c\), and a nontrivial simple
\(M\)-alternating path \(A\) of type \(mm\) from \(c\) to \(x\), meeting
\(C\) only at \(c\). In the \(K_4\)-subdivision case, after labeling the
two opposite \(mm\)-links, the same conclusion follows from the routing
through the two \(mm\)-links and four \(nn\)-links used in the closed-ear
case of \cref{lem_ear_absorption}. Thus in either geometry
\[
V(A)\cap V(C)=\{c\}.
\]

If \(b=x\), then \(x\notin V(C)\), since \(A\) is nontrivial and meets
\(C\) only at \(c\). If \(b\neq x\), then \(B\) lies outside \(Q\).
Hence \(C\cap B=\varnothing\). Since \(P\) meets \(Q\) only at \(x\),
the concatenation \(A\cup P\) is a nontrivial simple
\(M\)-alternating path of type \(mm\) from \(c\) to \(b\), and it meets
\(C\cup B\) only at its endpoints. Consequently
\[
C\cup A\cup P\cup B
\]
is an \(M\)-T-posy containing \(y\), again contradicting \(y\notin Z\).
Therefore \(Z=V(H)\), which proves the theorem.
\end{proof}

\begin{theorem}\label{thm_marked_jposy}
Let \(G\) be a graph, let \(M\in\mathcal M(G)\), and let \(H\) be the trace
of an \(M\)-J-posy.  For every \(v\in V(H)\), there are a maximum matching
\(M_v\in\mathcal M(G)\) and an \(M_v\)-T-posy containing \(v\).
\end{theorem}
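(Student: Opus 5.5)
The plan is to transfer the statement from the trace \(H\) back to \(G\) through \cref{thm_jposy_trace_strict_hall} and \cref{thm_marked_strict_hall}, and then lift the resulting perfect matching of \(H\) to a maximum matching of \(G\) using \cref{lem_trace_perfect}.

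First, \cref{thm_jposy_trace_strict_hall} shows that \(H\) is connected and that \(M_H=M\cap E(H)\) is a perfect matching of \(H\). It also shows that \(H\) satisfies strict Hall. These are exactly the hypotheses of \cref{thm_marked_strict_hall}. Applying that theorem, for each \(v\in V(H)\) we obtain a perfect matching \(N_v\) of \(H\) and an \(N_v\)-T-posy \(Q_v\subseteq H\) with \(v\in V(Q_v)\).

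Next we lift \(N_v\) to \(G\) by setting
\[
M_v:=N_v\cup\bigl(M\setminus E(H)\bigr).
\]
This is a matching of \(G\). By \cref{lem_trace_perfect}, no edge of \(M\) has exactly one endpoint in \(V(H)\). Hence every edge of \(M\setminus E(H)\) has both endpoints outside \(V(H)\), and so it is disjoint from the edges of \(N_v\), which cover only \(V(H)\).

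There is one subtlety here. An edge of \(M\) with both endpoints in \(V(H)\) might not lie in \(E(H)\), because the trace need not be induced. This cannot happen: each vertex of \(H\) is already matched by \(M_H\subseteq M\), so such an edge would be a second \(M\)-edge at that vertex. Therefore \(M\setminus E(H)=M\setminus M_H\), and the matching edges outside \(H\) avoid \(V(H)\) entirely.

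The cardinality check is then immediate. We have \(|N_v|=|V(H)|/2=|M_H|\), so \(|M_v|=|M|\), and \(M_v\in\mathcal M(G)\).

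Finally, the T-posy property transfers from \(H\) to \(G\). Being an \(N_v\)-T-posy depends only on the statuses of the edges of \(Q_v\) and on its geometry. Every edge of \(Q_v\) lies in \(H\), and \(M_v\cap E(H)=N_v\), so these statuses are unchanged. Thus \(Q_v\) is an \(M_v\)-T-posy in \(G\) containing \(v\).

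I expect no serious obstacle. All the substantive work sits in the two cited theorems, and the only step needing care is the verification that \(M_v\) is a matching of maximum size.
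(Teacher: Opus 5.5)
Your proposal is correct and follows the paper's proof: you apply \cref{thm_jposy_trace_strict_hall} and then \cref{thm_marked_strict_hall} inside the trace \(H\), and extend the resulting perfect matching of \(H\) by \(M\setminus E(H)\), using \cref{lem_trace_perfect} for disjointness and the count \(|M_v|=|M|\). Your explicit checks, that no \(M\)-edge joins two vertices of \(H\) without lying in \(E(H)\) and that edge statuses on the T-posy are unchanged, only spell out what the paper states more briefly.
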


\begin{proof}
By \cref{thm_jposy_trace_strict_hall}, \(H\) is connected, matchable, and strict Hall.
Apply \cref{thm_marked_strict_hall} to obtain a perfect matching \(M_v'\) of
\(H\) and an \(M_v'\)-T-posy containing \(v\).  By
\cref{lem_trace_perfect}, no edge of \(M\) joins \(V(H)\) to its complement.
Moreover, every \(M\)-edge incident with \(V(H)\) belongs to \(E(H)\), so
\(M\setminus E(H)\) is a matching on \(G-V(H)\) and matches precisely the
vertices there that were saturated by \(M\).  Since
\(|M_v'|=|M\cap E(H)|=|V(H)|/2\),
\[
M_v=M_v'\cup\bigl(M\setminus E(H)\bigr)
\]
is a matching of \(G\) with \(|M_v|=|M|\).  It is therefore maximum and
extends the required T-posy.
\end{proof}

\begin{corollary}\label{cor_posy_invariance}
A vertex lies in an $M$-J-posy for some maximum matching if and only if it lies in a T-posy for some maximum matching.  Hence T-, S-, and J-posies mark the same vertices.
\end{corollary}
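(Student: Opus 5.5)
The plan is to prove the two directions separately and then read off the final sentence from the chain of inclusions between the three posy families.

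For the direction from T-posies to J-posies, we check directly from the definitions that every \(M\)-T-posy is an \(M\)-S-posy, and that every \(M\)-S-posy is an \(M\)-J-posy. An S-posy consists of two blossoms whose bases are joined by a simple alternating \(mm\)-path, and a simple path is a particular walk; the J-definition allows the blossoms to be arbitrary, possibly intersecting or even identical. Hence a vertex lying in a T-posy (or an S-posy) for some maximum matching also lies in a J-posy for that same matching.

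For the converse, suppose \(v\) lies in an \(M\)-J-posy for some \(M\in\mathcal M(G)\). Let \(H\) be its trace, so that \(v\in V(H)\). We then invoke \cref{thm_marked_jposy}, which produces a maximum matching \(M_v\in\mathcal M(G)\) and an \(M_v\)-T-posy containing \(v\). This is the entire converse. The substantive work, namely strict Hall for the trace (\cref{thm_jposy_trace_strict_hall}) and the rigid posy theorem (\cref{thm_marked_strict_hall}), has already been carried out.

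Finally, let \(P_T,P_S,P_J\) denote the sets of vertices lying in a T-, S-, or J-posy for some maximum matching. The first direction gives \(P_T\subseteq P_S\subseteq P_J\), and the second gives \(P_J\subseteq P_T\), so all three sets coincide. The only point needing care is the remark in \cref{thm_marked_jposy} that extending \(M_v'\) by \(M\setminus E(H)\) preserves maximality, and this is already established there. I therefore expect no real obstacle: the corollary is a direct repackaging of \cref{thm_marked_jposy} together with the trivial inclusions.
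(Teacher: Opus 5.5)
Your proposal is correct and follows the paper's own proof: the inclusions T-posy $\subseteq$ S-posy $\subseteq$ J-posy hold by definition, and the only nontrivial direction is exactly \cref{thm_marked_jposy}. That theorem gives $P_J\subseteq P_T$, so the three vertex sets coincide.
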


\begin{proof}
Only the reverse inclusion needs proof, and it is exactly
\cref{thm_marked_jposy}.
\end{proof}

\section{Reduction of \(J\)-flowers}\label{sec_flower_reduction}

The flower case is reduced to the posy case by a small triangle attachment.
We separate the matching bookkeeping and the removal step from the main
reduction.

\begin{lemma}[Triangle-Attachment Matching Lemma]
\label{lem_triangle_attachment_matching}
Let \(G\) be a graph, let \(M\in\mathcal M(G)\), and let \(r\) be
\(M\)-exposed. Form \(G^+\) by adding three new vertices \(x,y,z\), the
triangle \(xyzx\), and the edge \(rx\). Then
\begin{equation}\label{eq_triangle_matching}
M^+:=M\cup\{rx,yz\}
\end{equation}
is a maximum matching of \(G^+\). Moreover, every maximum matching of
\(G^+\) contains both \(rx\) and \(yz\), and its restriction to \(G\) is a
maximum matching of \(G\).
\end{lemma}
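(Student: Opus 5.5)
The plan is a direct counting argument: first an upper bound on \(\mu(G^+)\) from the structure of the gadget, then a comparison showing that any maximum matching of \(G^+\) must use both gadget edges and restrict to a maximum matching of \(G\).

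\textbf{Step 1: \(M^+\) is a matching of size \(|M|+2\).} Since \(r\) is \(M\)-exposed and \(x,y,z\) are new vertices, the edges \(rx\) and \(yz\) are disjoint from \(M\) and from each other. Hence \(M^+\) is a matching and \(|M^+|=|M|+2\).

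\textbf{Step 2: \(\mu(G^+)\le\mu(G)+2\).} Take any matching \(N\) of \(G^+\) and split it as \(N=N_G\cup N_{\mathrm{new}}\), where \(N_G=N\cap E(G)\) and \(N_{\mathrm{new}}\) consists of the edges meeting \(\{x,y,z\}\). These three vertices can carry at most one edge each, and the triangle \(xyz\) alone supports at most one edge. So \(|N_{\mathrm{new}}|\le 2\), and \(|N_{\mathrm{new}}|=2\) forces one triangle edge together with \(rx\). Since \(N_G\) is a matching of \(G\), we get \(|N|\le\mu(G)+2=|M|+2\), so \(M^+\) is maximum.

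\textbf{Step 3: structure of an arbitrary maximum matching \(N\).} Now \(|N|=|M|+2\). The bound \(|N_G|\le\mu(G)\) combined with \(|N_{\mathrm{new}}|\le2\) forces equality in both. Thus \(|N_{\mathrm{new}}|=2\), and \(N_{\mathrm{new}}\) consists of \(rx\) plus a triangle edge. The triangle edge must avoid \(x\), which is already covered by \(rx\), so it is \(yz\). Finally, \(N\cap E(G)=N_G\) has size \(\mu(G)\), so it is a maximum matching of \(G\).

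The only delicate point is the case analysis in Step 2 showing that \(|N_{\mathrm{new}}|=2\) forces \(rx\in N\). Without \(rx\), the only new edges available are the triangle edges, and any two of these share a vertex, so at most one of them can lie in \(N\). With \(rx\), the second edge can only be \(yz\). Everything else is routine counting.
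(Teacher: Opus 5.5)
Your proof is correct and follows essentially the same counting argument as the paper: at most two matching edges can meet \(\{x,y,z\}\), the only disjoint pair among the four new edges is \(rx,yz\), and the equality \(|N|=|M|+2\) forces both of these edges and a maximum restriction to \(G\). Your Step 1 (checking that \(M^+\) is a matching) and your phrasing via equality in both bounds simply make explicit what the paper leaves implicit.
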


\begin{proof}
Deleting the three new vertices from any matching of \(G^+\) removes at most
two edges. Hence every matching of \(G^+\) has size at most
\(|M|+2=|M^+|\), proving that the matching in
\cref{eq_triangle_matching} is maximum.

Among the four edges incident with the three new vertices, the only two
disjoint edges are \(rx\) and \(yz\). Therefore a maximum matching that
omitted either of them would use at most
one edge incident with the new vertices. After restriction to \(G\), such a
matching would have size at most \(|M|+1\), whereas every maximum matching of
\(G^+\) has size \(|M|+2\). Thus every maximum matching contains \(rx\) and
\(yz\), and deleting these two edges leaves a maximum matching of \(G\).
\end{proof}

\begin{lemma}[Triangle-Removal Lemma]\label{lem_triangle_removal}
Use the notation of \cref{lem_triangle_attachment_matching}, and let
\(N^+\in\mathcal M(G^+)\). Suppose that \(Q\) is an
\(N^+\)-T-posy containing an old vertex \(v\in V(G)\). If \(Q\) meets
\(\{x,y,z\}\), then the triangle \(xyzx\) is one blossom of a barbell-type
T-posy, with base \(x\), and its joining path leaves through \(xr\). Removing
the triangle and \(xr\) leaves an \(N\)-T-flower of \(G\) containing \(v\),
where
\[
N=N^+\setminus\{rx,yz\}.
\]
\end{lemma}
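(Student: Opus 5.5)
Everything follows from the local structure of \(G^+\) near the new vertices, combined with \cref{lem_triangle_attachment_matching}. That lemma gives \(rx,yz\in N^+\), so \(N=N^+\setminus\{rx,yz\}\) is a maximum matching of \(G\). Moreover, \(r\) is \(N\)-exposed, because its \(N^+\)-mate is \(x\).

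\textbf{Step 1: where the triangle sits in \(Q\).} Suppose \(Q\) meets \(\{x,y,z\}\). The vertices \(y\) and \(z\) have degree \(2\) in \(G^+\), and their only neighbours are each other and \(x\). The vertex \(x\) has degree \(3\), with neighbours \(y,z,r\). In a T-posy every vertex has degree \(2\) or \(3\), and the degree-\(3\) vertices are exactly the bases (barbell case) or the branch vertices (\(K_4\) case). Every vertex of \(Q\) is matched inside \(Q\) by \(N^+\).

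First, \(Q\) contains \(x\). If \(Q\) contained \(y\) but not \(x\), then \(y\) would have degree at most \(1\) in \(Q\), which is impossible. The same holds for \(z\). Next, \(Q\) contains \(r\), since \(x\) must be matched inside \(Q\) by \(rx\).

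Second, \(Q\) contains \(y\) and \(z\), and hence the edge \(yz\). Suppose \(Q\) contains \(y\) but not \(z\). Then \(y\) has degree at most \(1\) in \(Q\), which is impossible. So \(Q\) contains either both of \(y,z\) or neither. If it contains neither, then \(x\) has degree \(1\) in \(Q\), again impossible. Once \(y,z\in V(Q)\), the edge \(yz\) lies in \(Q\), because it is their \(N^+\)-edge.

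Third, \(Q\) contains the whole triangle. By the second step, \(y\) and \(z\) have degree \(2\) in \(Q\), so the edges \(xy\) and \(xz\) are also present, and \(x\) has degree \(3\) in \(Q\). The cycle \(xyzx\) is an \(N^+\)-blossom with base \(x\).

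Fourth, \(Q\) is of barbell type. The triangle is a cycle of \(Q\) through the degree-\(3\) vertex \(x\). The vertices \(y\) and \(z\) have no other neighbours, so the triangle is attached to the rest of \(Q\) only at \(x\), through the single edge \(xr\). An even subdivision of \(K_4\) is \(3\)-connected up to subdivision, so no branch vertex can be a cut vertex of it. Hence \(Q\) is a barbell. One blossom is the triangle with base \(x\), and the joining path \(P\) starts with the edge \(xr\in N^+\), consistent with type \(mm\).

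\textbf{Step 2: the flower left after removal.} Write \(Q=T\cup P\cup C\), where \(T\) is the triangle, \(C\) is the other blossom with base \(b\), and \(P\) is the joining path from \(x\) to \(b\) of type \(mm\). Set \(P'=P-x\), the path from \(r\) to \(b\). We check that \(C\cup P'\) is an \(N\)-T-flower.

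The path \(P'\) consists entirely of old edges, since all new vertices lie in \(T\) and \(P\) leaves \(T\) only through \(xr\). These edges have the same status in \(N\) as in \(N^+\), so \(C\) is still an \(N\)-blossom with base \(b\). The path \(P\) has odd length, so \(P'\) has even length. Read from \(b\), the path \(P'\) has first edge of type \(m\) and last edge of type \(n\), because the edge \(xr\) we removed was the \(m\)-edge at that end. So \(P'\) is of type \(mn\) from \(b\) to \(r\).

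The endpoint \(r\) is \(N\)-exposed. The path \(P'\) meets \(C\) only at \(b\), because \(P\) does. The degenerate case \(b=r\) cannot occur: \(P\) has odd length, so \(P'\) would then have length \(1\), meaning \(P=x,r\) and the mate of \(r\) would be \(x\), which is not in \(C\).

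The vertex \(v\) is old, so it lies in \(C\cup P'\). Therefore \(C\cup P'\) is an \(N\)-T-flower of \(G\) containing \(v\).

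\textbf{Main obstacle.} The one delicate point is excluding the \(K_4\) geometry, since the argument above relies on the fact that no branch vertex of an even subdivision of \(K_4\) is a cut vertex. A second way to see it: in the \(K_4\) case the only \(nn\)-links and \(mm\)-links at a branch vertex are fixed. The triangle's two edges \(xy\) and \(xz\) would have to lie on distinct links. Each of those links would have to leave through \(y\) or \(z\), and they could only end back at \(x\), which is impossible because distinct links have distinct endpoint pairs.
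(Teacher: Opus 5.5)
Your proof follows the paper's route. The degree-two vertices \(y,z\), together with \(rx,yz\in N^+\), force the whole triangle into \(Q\). The \(K_4\) geometry is then excluded; you argue that \(x\) would be a cut vertex of a \(2\)-connected subdivision of \(K_4\), while the paper notes that every cycle of such a subdivision has at least three branch vertices and \(xyzx\) has at most one. Either argument works. Finally, deleting the triangle and \(xr\) leaves the other blossom \(C\) together with the stem \(P'\) of type \(mn\).

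One step is false, however: the case \(b=r\) is not impossible. If \(b=r\), then \(P=x,r\) has length \(1\) and \(P'\) is the trivial path at \(r\), of length \(0\) rather than \(1\). The fact that the mate \(x\) of \(r\) lies outside \(C\) is not a contradiction; it is exactly what makes \(r\) the base of \(C\). For a concrete case, let \(G\) be the triangle \(r,a,c\) with \(M=\{ac\}\). Then \(xyzx\), the edge \(xr\), and the blossom \(r,a,c,r\) form an \(N^+\)-T-posy with \(b=r\). The conclusion survives, because a T-flower may have a trivial stem: \(C\) itself, with \(N\)-exposed base \(r\), is an \(N\)-T-flower containing \(v\). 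Replace the claimed impossibility with this observation, which is the ``possibly trivial'' case in the paper's proof.
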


\begin{proof}
Every vertex of a T-posy is matched inside it. Thus the presence of \(y\) or
\(z\) forces the matching edge \(yz\), and their degree two then forces both
\(xy\) and \(xz\). If only \(x\) were present, its matching edge \(xr\) and
the degree condition in a subdivision would force \(xy\) or \(xz\), with the
same conclusion. Hence \(Q\) contains the whole triangle.

The T-posy \(Q\) cannot be of \(K_4\)-type: the cycle \(xyzx\) would contain
at most one branch vertex because \(y,z\) have degree two, whereas every
cycle in a subdivision of \(K_4\) contains at least three branch vertices.
Therefore the triangle is one blossom of a barbell-type T-posy, with base
\(x\), and its joining path leaves through the unique external matching edge
\(xr\).

By \cref{lem_triangle_attachment_matching},
\(N=N^+\setminus\{rx,yz\}\) is maximum in \(G\). Deleting the triangle and
\(xr\) from \(Q\) leaves the other \(N\)-blossom and a simple alternating
path, possibly trivial, from its base to \(r\), internally disjoint from that
blossom. When non-trivial and oriented from the base to \(r\), the path has
type \(mn\), so the remainder is an \(N\)-T-flower. Since \(v\) is an old
vertex, it remains in the resulting configuration.
\end{proof}

\begin{theorem}[\(J\)-Flower Reduction Theorem]\label{thm_marked_jflower}
Let \(G\) be a graph, let \(M\in\mathcal M(G)\), and let \(F\) be an
\(M\)-J-flower with exposed root \(r\). For every \(v\in V(F)\), there are
a maximum matching \(N\in\mathcal M(G)\) and either an \(N\)-T-flower or an
\(N\)-T-posy containing \(v\).
\end{theorem}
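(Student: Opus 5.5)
The plan is to reduce the flower case to the posy case through the triangle attachment of \cref{lem_triangle_attachment_matching}. Form \(G^+\) by attaching the triangle \(xyzx\) to the exposed root \(r\) via the edge \(rx\), and put \(M^+=M\cup\{rx,yz\}\). By \cref{lem_triangle_attachment_matching}, \(M^+\in\mathcal M(G^+)\).

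First I will show that the \(M\)-J-flower \(F\) extends to an \(M^+\)-J-posy in \(G^+\). Let \(C\) be the blossom of \(F\), with base \(b\), and let \(W\) be its stem from \(b\) to \(r\).
\begin{itemize}
\item If \(W\) is non-trivial, it has type \(mn\) when oriented from \(b\) to \(r\). Appending the edge \(rx\), which lies in \(M^+\), produces an \(M^+\)-alternating walk \(W\cdot rx\) of type \(mm\) from \(b\) to \(x\).
\item If \(W\) is trivial, then \(b=r\), and the single edge \(rx\) is already a non-trivial walk of type \(mm\).
\end{itemize}
The triangle \(xyzx\) is an \(M^+\)-blossom with base \(x\), since \(yz\in M^+\). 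The blossom \(C\) is still an \(M^+\)-blossom, because \(M^+\) agrees with \(M\) on \(E(G)\). Hence \(C\), the triangle, and the joining walk form an \(M^+\)-J-posy in \(G^+\) whose trace contains \(V(F)\), and in particular contains \(v\).

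Next, apply \cref{thm_marked_jposy} to \(G^+\), to \(M^+\), and to this J-posy. This gives a maximum matching \(N^+\in\mathcal M(G^+)\) and an \(N^+\)-T-posy \(Q\) containing \(v\). By \cref{lem_triangle_attachment_matching}, \(N^+\) contains both \(rx\) and \(yz\), and \(N:=N^+\setminus\{rx,yz\}\) is a maximum matching of \(G\). We then split into two cases.
\begin{itemize}
\item If \(Q\) meets \(\{x,y,z\}\), then \cref{lem_triangle_removal} applies directly: deleting the triangle and the edge \(xr\) leaves an \(N\)-T-flower of \(G\) containing \(v\).
\item If \(Q\) avoids \(\{x,y,z\}\), then \(Q\subseteq G\), and every edge of \(Q\) keeps its status under \(N\), because \(N\) and \(N^+\) agree on \(E(G)\). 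Thus \(Q\) is an \(N\)-T-posy in \(G\) containing \(v\).
\end{itemize}

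The main point needing care is the first step: checking that the extended object genuinely satisfies the J-posy definition. The case of a trivial stem is handled in the list above. The definition also allows the walk to meet the blossoms arbitrarily and to repeat vertices and edges, so the fact that \(W\) may meet \(C\) away from \(b\) causes no difficulty. The remaining requirement is that the two bases are joined by a non-trivial \(mm\)-walk, and this is exactly what appending \(rx\) provides. Everything else follows from the cited lemmas.
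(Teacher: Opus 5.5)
Your proposal is correct and follows the paper's proof essentially step for step: attach the triangle at the exposed root, extend the stem by the matching edge \(rx\) to obtain an \(M^+\)-J-posy, apply \cref{thm_marked_jposy} in \(G^+\), and then either keep the T-posy if it avoids the triangle or strip the triangle via \cref{lem_triangle_removal}. Your explicit treatment of the trivial-stem case (\(b=r\), with joining walk \(rx\)) is a small clarification that the paper leaves implicit.
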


\begin{proof}
Let the blossom of \(F\) have base \(b\), and orient its generalized stem
\(W\) from \(b\) to \(r\). Form \(G^+\) as in
\cref{lem_triangle_attachment_matching}. The triangle is an
\(M^+\)-blossom with base \(x\), where \(M^+\) is given by
\cref{eq_triangle_matching}. Following \(W\) from \(b\) to \(r\) and then
the matching edge \(rx\) gives an \(M^+\)-\(mm\)-walk from \(b\) to \(x\).
Hence the original blossom, this walk, and the new triangle form an
\(M^+\)-J-posy.

By \cref{thm_marked_jposy}, the old vertex \(v\) lies in an
\(N^+\)-T-posy \(Q\) of \(G^+\), for some maximum matching \(N^+\).
By \cref{lem_triangle_attachment_matching},
\(N=N^+\setminus\{rx,yz\}\) is a maximum matching of \(G\).

If \(Q\) avoids the triangle, then \(Q\) is already an \(N\)-T-posy of
\(G\). If \(Q\) meets the triangle, \cref{lem_triangle_removal} gives an
\(N\)-T-flower of \(G\) containing \(v\).
\end{proof}

\section{Configuration invariance}\label{sec_configuration_invariance}

\begin{theorem}[Configuration Invariance Theorem]
\label{thm_configuration_invariance}
For every graph \(G\),
\begin{equation}\label{eq_configuration_invariance}
\SD_T(G)=\SD_S(G)=\SD_J(G).
\end{equation}
We therefore write
\[
\SD(G):=\SD_T(G)=\SD_S(G)=\SD_J(G)
\]
for their common vertex set.
\end{theorem}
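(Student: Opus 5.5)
The plan is to close the chain of inclusions in \cref{eq_SD_inclusions} by proving the single reverse inclusion \(\SD_J(G)\subseteq\SD_T(G)\). Since \(\SD_T(G)\subseteq\SD_S(G)\subseteq\SD_J(G)\) holds directly from the definitions, this one inclusion forces all three sets to coincide.

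For the reverse inclusion, take \(v\in\SD_J(G)\). By the definition in \cref{eq_SD_vertex_sets}, there is a maximum matching \(M\in\mathcal M(G)\) such that \(v\) lies either in an \(M\)-J-posy or in an \(M\)-J-flower. I split into these two cases.

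\emph{J-posy case.} Suppose \(v\) lies in an \(M\)-J-posy. Let \(H\) be its trace, so \(v\in V(H)\). \Cref{thm_marked_jposy} gives some \(M_v\in\mathcal M(G)\) and an \(M_v\)-T-posy containing \(v\). Hence \(v\in\SD_T(G)\).

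\emph{J-flower case.} Suppose \(v\) lies in an \(M\)-J-flower \(F\) with exposed root \(r\). \Cref{thm_marked_jflower} gives some \(N\in\mathcal M(G)\) together with either an \(N\)-T-flower or an \(N\)-T-posy containing \(v\). Both kinds of configuration are counted in \(\SD_T(G)\), so again \(v\in\SD_T(G)\).

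I expect no real obstacle here, since the substantive work already sits in \cref{thm_marked_jposy} and \cref{thm_marked_jflower}. The only points to check are bookkeeping. First, "lies in" a configuration should mean the same thing as lying in its trace; I will state this explicitly, because the trace definition in \cref{sec_jposy_traces} covers every vertex used by the blossoms and the walk. Second, the maximum matchings produced by those theorems are maximum in \(G\) itself, not in an auxiliary graph. For the flower case this is exactly the role of \cref{lem_triangle_attachment_matching}, which is already used inside \cref{thm_marked_jflower}. With both checks in place, the common set \(\SD(G)\) is well defined.
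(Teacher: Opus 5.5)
Your proposal is correct and is essentially the paper's proof: the forward inclusions come directly from the definitions, and the reverse inclusion \(\SD_J(G)\subseteq\SD_T(G)\) is obtained by splitting into the J-posy case, handled by \cref{thm_marked_jposy}, and the J-flower case, handled by \cref{thm_marked_jflower}. Your two bookkeeping checks are already guaranteed by those two theorems. Lying in a configuration means lying in its trace, and the matchings produced are maximum in \(G\) itself, via \cref{lem_triangle_attachment_matching} in the flower case.
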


\begin{proof}
The definitions give
\(\SD_T(G)\subseteq\SD_S(G)\subseteq\SD_J(G)\).  Let
\(v\in\SD_J(G)\).  If \(v\) lies in an $M$-J-posy, the marked reduction
\cref{thm_marked_jposy} gives a T-posy containing \(v\), relative to a
suitable maximum matching.  If \(v\) lies in a J-flower,
\cref{thm_marked_jflower} gives a T-flower or a T-posy containing \(v\), again
relative to a suitable maximum matching.  Thus \(v\in\SD_T(G)\), proving the
reverse inclusion.
\end{proof}

The equality in \cref{eq_configuration_invariance} has a useful
methodological interpretation. The additional freedom of the
\(J\)-configurations is structural rather than set-theoretic: it facilitates
the manipulation of alternating structures without changing the set of
vertices detected. Thus the theorem allows us to work simultaneously at two
different levels of rigidity. The \(T\)-configurations provide a simple
geometric model in which the relevant obstruction is easy to recognize and
reason about, whereas the \(J\)-configurations provide a substantially more
flexible language in which to carry out reductions and proofs. Attempting the
latter arguments directly within the rigid \(T\)-framework would generally
require considerably more delicate control of intersections, alternating
paths, and reroutings.

\begin{corollary}[Configuration-independent Sterboul characterization]
\label{cor_configuration_independent_sterboul}
For a graph \(G\), the following statements are equivalent:
\begin{enumerate}[label=(\roman*)]
\item \(G\) is not a K\H{o}nig--Egerv\'{a}ry graph;
\item for some maximum matching of \(G\), there is a T-configuration;
\item for some maximum matching of \(G\), there is an S-configuration;
\item for some maximum matching of \(G\), there is a J-configuration.
\end{enumerate}
Here an \(X\)-configuration means an \(X\)-flower or an \(X\)-posy, with the
convention that S-flowers are T-flowers.
\end{corollary}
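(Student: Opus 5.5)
By the Configuration Invariance Theorem, statements (ii), (iii) and (iv) are equivalent. A T-configuration is an S-configuration, and an S-configuration is a J-configuration, straight from the definitions. Conversely, a J-configuration relative to some maximum matching places its vertices in \(\SD_J(G)=\SD_T(G)\), so \(\SD_T(G)\neq\varnothing\). That produces a T-flower or T-posy for some maximum matching. So (ii)\(\Leftrightarrow\)(iii)\(\Leftrightarrow\)(iv) follows from \cref{thm_configuration_invariance} together with the observation that each of these statements is equivalent to the relevant \(\SD\)-set being nonempty.

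It remains to link (i) with (ii), and I would do this with Sterboul's classical theorem~\cite{sterboul1979characterization}. That theorem says \(G\) fails to be KE if and only if, for some (equivalently, every) maximum matching \(M\), there is an \(M\)-flower or \(M\)-posy in Sterboul's sense. The main step is to check that Sterboul's configurations fall between the T- and J-families. A classical flower is exactly a T-flower. A classical posy consists of two vertex-disjoint blossoms joined by an odd alternating path of type \(mm\) that meets them only at their bases, with the standard relaxations of Sterboul/Deming at most letting the path or the blossoms overlap. Such an object is always at least a J-posy, and its barbell form is a T-posy. Hence ``not KE'' gives a J-configuration, which is (iv), and by the equivalence above we get (ii).

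For the converse, (ii)\(\Rightarrow\)(i), I would argue directly so as not to depend on the precise classical definitions. Let \(M\) be maximum and let \(F\) be a T-configuration for \(M\). In the posy case, its trace \(H\) is an even subdivision of a barbell or of \(K_4\), and \(M_H\) is a perfect matching of \(H\) by \cref{lem_trace_perfect}. Take any independent set \(S\) of \(G\) and assume \(|S|+|M|=|V(G)|\), aiming for a contradiction. Every edge of \(M\) has at most one end in \(S\). The count forces every \(M\)-edge to meet \(S\) in exactly one vertex, and every \(M\)-exposed vertex to lie in \(S\). Restricting to \(H\), the set \(S\cap V(H)\) contains exactly one end of each \(M_H\)-edge, so \(|S\cap V(H)|=|V(H)|/2\). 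But \(H\) satisfies strict Hall by \cref{thm_jposy_trace_strict_hall}, and the proof of \cref{thm_marked_strict_hall} shows \(\alpha(H)<|V(H)|/2\), a contradiction.

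In the flower case, the root \(r\) is exposed, so \(r\in S\). Walk along the stem from \(r\). Because \(S\) contains exactly one end of each matching edge and is independent, membership in \(S\) alternates along the stem, and this alternation forces \(b\in S\). The same alternation continues around the blossom from \(b\) in both directions. The two neighbours of \(b\) on the blossom are then outside \(S\), their mates are in \(S\), and so on. Since the cycle is odd, one non-matching edge of the blossom ends up with both endpoints in \(S\), contradicting independence. Hence \(\alpha(G)+\mu(G)<|V(G)|\), and \(G\) is not KE.

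I expect the only delicate point to be the (i)\(\Rightarrow\)(ii) direction: it requires matching Sterboul's (or Deming's) original posy definition against the families defined here, and confirming that the classical theorem yields a configuration lying inside the J-family, or preferably directly in the T-family.
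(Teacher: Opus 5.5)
Your proof is correct, and for (ii)$\Leftrightarrow$(iii)$\Leftrightarrow$(iv) it coincides with the paper's: the definitional chain, plus the Configuration Invariance Theorem applied to a nonempty $\SD_J(G)$. You take a different route in how (i) is attached.

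The paper simply cites Sterboul for (i)$\Leftrightarrow$(ii) ``in the rigid formulation used here''. You use Sterboul only for (i)$\Rightarrow$(iv). That step needs nothing beyond the fact that any classical flower or posy, whatever overlaps the original definition allows, is a J-configuration. You then return to (ii) through the invariance theorem.

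For (ii)$\Rightarrow$(i) you give a self-contained proof. Equality in $|S|+|M|\le|V(G)|$ forces $S$ to contain exactly one end of every $M$-edge and every exposed vertex. For a posy, this contradicts $\alpha(H)<|V(H)|/2$ for its strict-Hall trace $H$ (\cref{thm_jposy_trace_strict_hall}). For a flower, the parity alternation from the root puts $b$ in $S$ and then contradicts independence on the blossom.

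The paper's version is shorter, but it tacitly identifies Sterboul-type posies with the rigid T-posies. Sterboul-type posies may have overlapping blossoms and a joining path that meets them, which places them closer to the S-family. Your detour through (iv) justifies that step using only results proved in the paper.

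Your converse also works unchanged for J-configurations: the alternation ignores repeated vertices, and every J-posy trace is strict Hall. So it even gives (iv)$\Rightarrow$(i) without the invariance theorem.

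A minor point: in the flower case, going around the blossom in one direction already suffices. After its $t$ matching edges, the last blossom vertex lies in $S$ and is adjacent to $b\in S$.
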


\begin{proof}
The equivalence of (1) and (2) is the classical Sterboul characterization
in the rigid formulation used here; see the original paper
\cite{sterboul1979characterization}. Deming obtained an independent
characterization of K\H{o}nig--Egerv\'{a}ry graphs
\cite{deming1979independence}.  The implications
(2)\(\Rightarrow\)(3)\(\Rightarrow\)(4) follow directly from the definitions.
Finally, (4) implies \(\SD_J(G)\neq\varnothing\), and
\cref{thm_configuration_invariance} gives
\(\SD_T(G)=\SD_J(G)\neq\varnothing\).  Hence a T-configuration exists for
some maximum matching, and (2) follows.
\end{proof}

\medskip
\noindent\textbf{Remark.}
Sterboul's original theorem contains the stronger matching-wise statement:
if \(G\) is non-KE, then every maximum matching admits a classical flower or
posy.  The corollary above records the corresponding vertex-level invariance.
It does not identify the witnessing maximum matchings across the three
conventions.
\medskip

\section{Further consequences and matching-covered graphs}\label{sec_applications}

The arguments used above have a direct interpretation in the theory of
matching-covered graphs.  We record two short consequences.  These consequences are not needed for
the configuration-invariance theorem, but they place the strict Hall condition
and the alternating digraph in a standard matching-theoretic setting.

Recall that a connected graph with at least two vertices is
\emph{matching-covered} if every edge belongs to a perfect matching.

\begin{proposition}\label{prop_strict_hall_matching_covered}
Let \(H\) be a connected graph with a perfect matching.  Then \(H\) satisfies
strict Hall if and only if \(H\) is non-bipartite and matching-covered.
\end{proposition}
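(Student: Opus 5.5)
The plan is to route both directions through \cref{lem_strict_hall_strong}: for a connected graph \(H\) with a perfect matching \(M\), strict Hall holds exactly when \(D_M(H)\) is strongly connected. Everything then reduces to the standard fact that an edge \(uw\notin M\) lies in some perfect matching if and only if it lies on an \(M\)-alternating cycle. In the digraph, the edge \(uw\) gives the arc \(u\to M(w)\), and an \(M\)-alternating cycle through \(uw\) corresponds to a directed walk from \(M(w)\) back to \(u\). This uses \(M(w)\to M(w)\)-style bookkeeping via the involution reversing arcs. I will state and verify this correspondence first, since it is the main technical step. An \(M\)-alternating cycle through \(uw\) alternates \(u,w,M(w),\dots,M(u),u\). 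Reading it two edges at a time gives a directed path in \(D_M(H)\) from \(M(w)\) to \(u\). Conversely, a shortest directed path from \(M(w)\) to \(u\) lifts to an alternating path, and I must check that it is simple, in the spirit of \cref{lem_directed_path_lifting}.

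For the forward direction, assume strict Hall. Non-bipartiteness is immediate: if \(H\) had a bipartition \((A,B)\), then \(A\) is independent and \(N_H(A)\subseteq B\). A perfect matching forces \(|A|=|B|\), giving \(|N_H(A)|\le|A|\), a contradiction. Since \(A\) is nonempty whenever \(|V(H)|\ge 2\), this covers all nontrivial cases; the one-vertex case cannot occur with a perfect matching. For matching-coveredness, edges in \(M\) are trivially fine. For \(uw\notin M\), strong connectivity (\cref{lem_strict_hall_strong}) supplies a shortest directed path from \(M(w)\) to \(u\). I will lift it to an alternating path \(M(w),\dots,M(u),u\). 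In a shortest directed path the vertices \(z_i\) are distinct, and their mates are distinct, but a \(z_i\) could equal some \(M(z_j)\). This is the main obstacle, since the lift could fail to be a simple cycle.

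To handle that obstacle, I would use a cleaner argument instead of repairing the lift. Suppose \(uw\) lies in no perfect matching. Then the standard Dulmage–Mendelsohn/Tutte-type reasoning, or directly the digraph, gives the following: the set \(X\) of vertices reachable from \(M(w)\) in \(D_M(H)\) avoids \(u\). But strong connectivity makes everything reachable, a contradiction, provided the equivalence holds for walks. So I would instead prove that a directed walk from \(M(w)\) to \(u\) yields a perfect matching containing \(uw\), via a symmetric difference argument. Take a shortest directed path. If its lift repeats a vertex, an alternating cycle appears. Then I apply a minimality argument: choose the pair (edge, path) minimizing length and show that any repetition produces a strictly shorter directed path from \(M(w)\) to \(u\), using the reversal \(x\to y \Leftrightarrow M(y)\to M(x)\). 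This gives a simple alternating cycle \(C\), and \(M\triangle C\) is a perfect matching containing \(uw\).

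For the converse, assume \(H\) is non-bipartite and matching-covered, and suppose strict Hall fails. By \cref{lem_strict_hall_strong}, \(D_M(H)\) is not strongly connected. Take a sink component \(K\); by \cref{lem_sink_component}, \(K\) is independent with \(N_H(K)=M(K)\), and \(K\ne V(H)\). If \(K\cup M(K)\ne V(H)\), connectivity gives an edge \(e\) from \(M(K)\) to the rest. Any perfect matching must match \(K\) into \(N(K)=M(K)\), exhausting it, so \(e\) lies in no perfect matching, contradicting matching-coveredness. Otherwise \(V(H)=K\cup M(K)\) with \(K\) independent. Applying the same argument to the sink component of the reversed structure, namely \(M(K)\) as a source component, shows \(M(K)\) is also independent. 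Hence \(H\) is bipartite, a contradiction.
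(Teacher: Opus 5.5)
The proposal breaks down in the matching-covered half of your forward direction. The gap cannot be closed, because that implication is false.

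Your key lemma says that a directed walk from $M(w)$ to $u$ in $D_M(H)$ yields a perfect matching containing $uw$. It fails, and so does the shortening argument. The first repeated vertex of a lift closes an odd $M$-blossom, not an even alternating cycle, and a shorter directed path need not exist.

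Take the barbell: triangles $a_1a_2a_3$ and $c_1c_2c_3$ joined by the edge $a_1c_1$, with $M=\{a_1c_1,a_2a_3,c_2c_3\}$.
\begin{itemize}
\item It is connected and non-bipartite.
\item Every vertex has degree at least $2$. An independent set contains at most one vertex of each triangle, so an independent pair has four neighbours. Hence strict Hall holds; equivalently, $D_M$ is strongly connected via $a_1\to a_2,a_3\to c_1\to c_2,c_3\to a_1$.
\item $M$ is the unique perfect matching, so $c_1c_2$ lies in no perfect matching.
\item For this edge, the shortest directed path from $M(c_2)=c_3$ to $c_1$ is $c_3\to a_1\to a_2\to c_1$. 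It lifts to the closed walk $c_1,c_2,c_3,c_1,a_1,a_3,a_2,a_1,c_1$. This walk traverses $a_1c_1$ twice, begins with the blossom $c_1c_2c_3$, and contains no alternating cycle through $c_1c_2$.
\end{itemize}

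So the proposition is false as stated, and you were right to distrust the lifting step. The paper's proof passes over exactly this point: it asserts that lifting a directed cycle of $D_M(H)$ through the arc of $e$ gives an $M$-alternating cycle, and the barbell refutes this. Since the barbell is itself a T-posy, the same example also refutes the later corollary identifying strong connectivity of $D_M(H)$ with non-bipartite matching-coveredness, and the remark that every J-posy trace is matching-covered.

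What your proposal does establish correctly is:
\begin{itemize}
\item non-bipartiteness under strict Hall;
\item the equivalence of strict Hall with strong connectivity of $D_M(H)$ (via the lemma);
\item the converse implication.
\end{itemize}
Your converse is the paper's argument in digraph form. A proper sink component $K$ is an independent set with $|N_H(K)|=|K|$, so every perfect matching maps $K$ onto $M(K)$. Matching-coveredness then rules out edges from $M(K)$ to $V(H)\setminus(K\cup M(K))$ and edges inside $M(K)$. It is this, not the fact that $M(K)$ is a source component, that makes $M(K)$ independent.
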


\begin{proof}
Suppose first that \(H\) satisfies strict Hall, and fix a perfect matching
\(M\).  By \cref{lem_strict_hall_strong}, \(D_M(H)\) is strongly connected.
Let \(e=xy\notin M\).  The arc of \(D_M(H)\) arising from \(e\) lies on a
directed cycle.  Lifting this cycle to \(H\) gives an \(M\)-alternating cycle
containing \(e\).  Switching \(M\) along this cycle gives a perfect matching
containing \(e\).  The edges of \(M\) already belong to a perfect matching,
so \(H\) is matching-covered.  If \(H\) were bipartite, with bipartition
\((A,B)\), then \(|A|=|B|\) and \(N_H(A)=B\), contradicting strict Hall.

Conversely, suppose that \(H\) is non-bipartite and matching-covered.  Let
\(S\) be a non-empty independent set.  A perfect matching gives
\(|N_H(S)|\ge |S|\).  If equality held, then every perfect matching would
match \(S\) bijectively to \(N_H(S)\).  Hence no perfect matching could use
an edge with both ends in \(N_H(S)\), or an edge joining \(N_H(S)\) to
\(V(H)\setminus(S\cup N_H(S))\).  Since \(H\) is matching-covered, no such
edge exists.  Also, no edge joins \(S\) to
\(V(H)\setminus(S\cup N_H(S))\), by the definition of the neighborhood.
Connectedness now gives \(V(H)=S\cup N_H(S)\).  Moreover, \(N_H(S)\) is
independent, since an edge inside it would belong to no perfect matching.
Thus \((S,N_H(S))\) is a bipartition of \(H\), a contradiction.  Therefore
\(|N_H(S)|>|S|\).
\end{proof}

Combining \cref{prop_strict_hall_matching_covered} with
\cref{lem_strict_hall_strong} removes the apparent dependence on the chosen
perfect matching.

\begin{corollary}\label{cor_alternating_digraph_matching_covered}
Let \(H\) be connected and let \(M\) be a perfect matching of \(H\).  Then
\[
\begin{aligned}
D_M(H)\text{ is strongly connected}
&\quad\Longleftrightarrow\quad
H\text{ is non-bipartite}\\
&\hspace{42mm}\text{and matching-covered}.
\end{aligned}
\]
Consequently, if \(D_M(H)\) is strongly connected for one perfect matching
\(M\), then \(D_{M'}(H)\) is strongly connected for every perfect matching
\(M'\) of \(H\).
\end{corollary}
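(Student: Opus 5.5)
The corollary follows by combining \cref{lem_strict_hall_strong} with \cref{prop_strict_hall_matching_covered}. Both results have the same hypotheses: \(H\) is connected and has a perfect matching. Since \(M\) is a perfect matching of \(H\), those hypotheses hold here.

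First, \cref{lem_strict_hall_strong} applied to \(H\) and \(M\) shows that \(D_M(H)\) is strongly connected exactly when \(H\) satisfies strict Hall. Next, \cref{prop_strict_hall_matching_covered} shows that \(H\) satisfies strict Hall exactly when \(H\) is non-bipartite and matching-covered. Chaining the two equivalences gives the displayed equivalence.

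The middle condition, strict Hall, involves only the graph \(H\) and not the matching \(M\). The same is true of the right-hand condition, being non-bipartite and matching-covered.

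The consequence uses this independence from \(M\). Suppose \(D_M(H)\) is strongly connected for one perfect matching \(M\). By the equivalence, \(H\) is non-bipartite and matching-covered. Now let \(M'\) be any perfect matching of \(H\). Then \(H\) is still connected and has the perfect matching \(M'\). Applying the equivalence again, this time with \(M'\) in place of \(M\), shows that \(D_{M'}(H)\) is strongly connected.

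The only point that needs care is bookkeeping. The matching-covered definition requires at least two vertices. This holds because \(H\) has a perfect matching and, implicitly, is nonempty. If \(H\) were the empty graph, the statement is degenerate and can be set aside by convention. I expect no real obstacle beyond this.
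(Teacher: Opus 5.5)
You follow exactly the paper's route, chaining \cref{lem_strict_hall_strong} with \cref{prop_strict_hall_matching_covered}. The gap is that the proposition fails in the direction ``strict Hall $\Rightarrow$ matching-covered'', so the left-to-right implication of the displayed equivalence is false and no proof of it can work.

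Counterexample: let $H$ be two triangles $abc$ and $def$ joined by the edge $cd$, with $M=\{ab,cd,ef\}$. The arcs of $D_M(H)$ are $a\to d$, $b\to d$, $c\to a$, $c\to b$, $d\to e$, $d\to f$, $e\to c$, $f\to c$, so $D_M(H)$ is strongly connected. Equivalently, strict Hall holds: every vertex has degree at least $2$, every independent pair has exactly four neighbours, and there are no independent triples. However, if $c$ were matched to $a$ or $b$, the other of $a,b$ would have no available neighbour. So $M$ is the unique perfect matching and $bc$ lies in no perfect matching. Thus $H$ is connected and non-bipartite but not matching-covered. (This $H$ is just the trace of a barbell $T$-posy.)

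The paper's proof of the proposition breaks at ``lifting this cycle to $H$ gives an $M$-alternating cycle''. A directed cycle of $D_M(H)$ lifts only to a closed alternating walk, which can revisit vertices when $z_i=M(z_j)$. Here $b\to d\to e\to c\to b$ lifts to $b,c,d,f,e,d,c,a,b$, which traverses $cd$ twice, and $M$ cannot be switched along it.

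What survives is the following. The other half of the proposition is proved correctly: non-bipartite and matching-covered implies strict Hall, which by \cref{lem_strict_hall_strong} is equivalent to strong connectivity of $D_M(H)$. So only the right-to-left implication of the display holds.

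The ``Consequently'' clause is also true, but your derivation of it uses the false implication (``By the equivalence, $H$ is non-bipartite and matching-covered''). Route it through strict Hall instead. Applying \cref{lem_strict_hall_strong} with $M$ turns strong connectivity of $D_M(H)$ into strict Hall, a condition that mentions no matching. Applying \cref{lem_strict_hall_strong} again with $M'$ gives strong connectivity of $D_{M'}(H)$. Your own remark that the middle condition is independent of $M$ is exactly the point; the detour through matching-coveredness is what must be dropped.
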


In particular, \cref{thm_jposy_trace_strict_hall} shows that the trace of every J-posy
is a non-bipartite matching-covered graph.  Thus the passage from a J-posy to
its trace replaces a flexible alternating-walk configuration by a classical
matching-covered object.

This passage also illustrates one of the practical advantages of configuration
invariance. The rigid \(T\)-configurations are well suited to recognizing and
formulating the obstruction, but they are considerably less convenient for
structural arguments involving traces, alternating reroutings, or conformal
reductions. The J-posy framework absorbs precisely this extra freedom, allowing
the proof to move into the language of strict Hall and matching-covered graphs
while preserving the same vertex set detected by the original
\(T\)-configurations.

We next connect the vertex-prescribed viewpoint with ear decompositions.  The proposition below is a convenient consequence of classical theory of ear decompositions for matching-covered graphs.  We claim no novelty for the existence of the conformal subdivisions used there.  A
matching-covered subgraph \(K\) of a matching-covered graph \(H\) is
\emph{conformal} if \(H-V(K)\) has a perfect matching.  A cycle \(C\) of
\(H\) is \emph{conformal} if \(H-V(C)\) has a perfect matching.  Lov\'asz's
ear-decomposition theorem states that a matching-covered subgraph is conformal
exactly when it occurs as a member of some ear decomposition of the ambient
graph; see~\cite{lovasz1983ear}.  We also use Little's classical theorem that
any two edges of a matching-covered graph lie in a common conformal cycle
\cite{little1974factorcovered}.  As usual in the matching-covered literature,
\(\Theta\) denotes the two-vertex multigraph with three parallel edges.  In an
ear decomposition starting from a conformal cycle, the first proper extension
consists either of one odd ear, producing an even subdivision of \(\Theta\),
or of a double ear, producing an even subdivision of \(K_4\); this is the
elementary first-extension case of the ear-decomposition theorem for
matching-covered graphs~\cite{lovasz1983ear}.

\begin{proposition}\label{prop_vertex_prescribed_ear_reduction}
Let \(H\) be a matching-covered graph that is neither \(K_2\) nor an even
cycle.  For every vertex \(v\in V(H)\), there is a conformal matching-covered
subgraph \(K_v\) containing \(v\) such that \(K_v\) is an even subdivision
of \(\Theta\) or \(K_4\).  Moreover, \(H\) has an ear decomposition that
contains \(K_v\); equivalently, reversing this ear decomposition reduces
\(H\) to \(K_v\) by successively deleting an ear or a double ear while
remaining within the class of matching-covered graphs.
\end{proposition}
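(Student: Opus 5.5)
The plan is to route the statement through the Rigid Posy Theorem (\cref{thm_marked_strict_hall}) and then translate the resulting T-posy into the language of conformal subgraphs.

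\emph{Step 1: place \(H\) in the setting of \cref{thm_marked_strict_hall}.} A matching-covered graph is connected and has a perfect matching. A bipartite matching-covered graph need not be \(K_2\) or an even cycle, so the hypothesis as stated does not exclude bipartite \(H\). In the bipartite case, however, every subgraph is bipartite, so no even subdivision of \(\Theta\) or \(K_4\) can occur: \(K_4\) contains triangles, and an even subdivision of \(\Theta\) between its two branch vertices consists of three odd paths, two of which form an even cycle, which is fine, but any such \(\Theta\)-subdivision is itself bipartite. I would therefore handle the two cases separately.

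\emph{Step 2: the non-bipartite case.} By \cref{prop_strict_hall_matching_covered}, \(H\) satisfies strict Hall. Then \cref{thm_marked_strict_hall} gives a perfect matching \(M_v\) of \(H\) and an \(M_v\)-T-posy \(Q\) containing \(v\). Because \(M_v\) restricts to the canonical perfect matching of \(Q\), the remainder \(M_v\setminus E(Q)\) is a perfect matching of \(H-V(Q)\), so \(Q\) is a nice subgraph.

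A barbell subdivision is not matching-covered, since its joining path is a bridge. I would therefore replace it using ear absorption. Take the conformal odd cycle \(C_1\). Using the strong connectivity of \(D_{M_v}(H)\), lift a directed path as in \cref{lem_directed_path_lifting} to obtain an odd ear \(R\) on \(C_1\) that contains the relevant vertices. The union \(C_1\cup R\) is then a \(\Theta\)-subdivision. Making sure \(v\) lands in this union requires care, so a cleaner route is to apply Little's theorem directly, as follows.

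\emph{Step 2, cleaner route: the main plan.} Pick an edge \(e\) at \(v\) and any other edge \(f\). Little's theorem gives a conformal cycle \(C\) through \(e\), so \(v\in V(C)\). Because \(H\) is not an even cycle, \(C\neq H\). Lovász's theorem says \(C\) is the first member of an ear decomposition of \(H\). The first proper extension of \(C\) in that decomposition is either a single odd ear or a double ear, producing an even subdivision \(K_v\) of \(\Theta\) or of \(K_4\).

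This \(K_v\) contains \(C\), and hence \(v\). It is matching-covered and, being a member of an ear decomposition of \(H\), it is conformal by Lovász's theorem. The same ear decomposition contains \(K_v\), and reversing it deletes ears or double ears one at a time while staying within matching-covered graphs, which is the final clause of the statement.

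This route also covers the bipartite case: an even subdivision of \(\Theta\) is bipartite, and in a bipartite graph the first extension is always a single ear, never a double ear. So Step 1's case split turns out to be unnecessary for the main argument.

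\emph{Expected obstacle.} The main point is justifying that the first extension is a \(\Theta\)- or \(K_4\)-subdivision. This needs the form of Lovász's ear theorem in which one may start from an arbitrary conformal cycle, with every intermediate graph being a conformal matching-covered subgraph and each step adding one ear or two ears. I would cite \cite{lovasz1983ear} for exactly this refinement and check that the conformal cycle supplied by Little's theorem qualifies as a starting point.
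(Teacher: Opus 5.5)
Your final route is exactly the paper's proof: Little's theorem gives a conformal cycle \(C\) through an edge at \(v\), Lov\'asz's theorem places \(C\) in an ear decomposition of \(H\), \(K_v\) is the first proper extension of \(C\) (an even subdivision of \(\Theta\) or \(K_4\)), and the decomposition is then reversed; the paper differs only in taking both edges at \(v\). The exploratory Step~1 and the detour through the Rigid Posy Theorem should simply be deleted, since the claim that a bipartite graph contains no even subdivision of \(\Theta\) is false, as you noticed yourself, and the final argument needs no case split.
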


\begin{proof}
Since \(H\neq K_2\) is matching-covered, \(v\) is incident with at least
two distinct edges.  Choose two such edges.  By Little's theorem
\cite{little1974factorcovered}, they lie in a common conformal cycle \(C\).
Since \(H\) is not an even cycle, an ear decomposition of \(H\) that contains
\(C\) has a first proper extension \(K_v\) of \(C\).  If this extension adds
one ear, then \(K_v\) is an even subdivision of \(\Theta\).  If it adds a
double ear, then \(K_v\) is an even subdivision of \(K_4\).  In either case
\(C\subseteq K_v\), and hence \(v\in V(K_v)\).  Every member of the ear
decomposition is conformal in \(H\). Reversing the portion of the decomposition
from \(H\) back to \(K_v\) therefore yields the asserted reduction.
\end{proof}

The first application identifies the strict Hall condition arising naturally
from J-posy traces with the classical class of non-bipartite matching-covered
graphs.  The second retains the vertex-prescribed character of our arguments:
a prescribed vertex can be preserved while the ambient matching-covered graph
is reduced to a conformal even subdivision of \(\Theta\) or \(K_4\), one of the
elementary structures arising at the beginning of an ear decomposition.
Stronger local and conformal-minor results are available in the
matching-covered literature; see, for example, \cite{kothari2026cubic}.

Together, these consequences illustrate the methodological role of
configuration invariance.  The rigid \(T\)-configurations provide the simpler
objects in which to recognize and formulate the relevant obstructions,
whereas the \(J\)-configurations provide the flexibility needed for structural
transformations and reductions.  Configuration invariance allows one to move
between these two levels without losing the vertex information carried by the
original configuration.

\section*{Acknowledgments}
This work was partially supported by Universidad Nacional de San Luis
(Argentina), PROICO 03-0723, MATH AmSud grant 22-MATH-02, Agencia I+D+i
(Argentina) grants PICT-2020-Serie A-00549 and PICT-2021-CAT-II-00105, and
CONICET (Argentina) grant PIP 11220220100068CO\@.

\section*{Declaration of generative AI and AI-assisted technologies in the writing process}
During the preparation of this work, the authors used ChatGPT (OpenAI) to improve the grammar and organization of several passages. After using this service, the authors reviewed and edited the content as needed and take full responsibility for the content of the publication.

\section*{Data availability}
Data sharing is not applicable to this article because no datasets were
generated or analyzed.

\section*{Declarations}
\textbf{Conflict of interest.} The authors declare that they have no conflict
of interest.

\bibliographystyle{elsarticle-num}
\bibliography{TAGmaster}

\end{document}